\theoremstyle{plain}
\newtheorem{teo}{Theorem}[section]
\newtheorem{defi}{Definition}[section]
\newtheorem{corollary}{Corollary}[section]
\newtheorem{lemma}{Lemma}[section]
\newtheorem{obs}{Observation}[section]
\numberwithin{equation}{section}
\def\re{\mathbb{R}}
\begin{document}

\title[Boundedness of the Gaussian Riesz potentials ] {Boundedness of the Gaussian Riesz potentials on Gaussian variable Lebesgue spaces}
\author{Eduard Navas}
\address{Departamento de Matem\'aticas, Universidad Nacional Experimental Francisco de Miranda, Punto Fijo, Venezuela.}
\email{enavas@correo.unefm.edu.ve}
   \author{Ebner Pineda}
\address{Departamento de Matem\'{a}tica,  Facultad de Ciencias Naturales y Matem\'aticas, ESPOL Guayaquil 09-01-5863, Ecuador.}
\email{epineda@espol.edu.ec}
\author{Wilfredo~O.~Urbina}
\address{Department of Mathematics, Actuarial Sciences  and Economics, Roosevelt University, Chicago, IL,
   60605, USA.}
\email{wurbinaromero@roosevelt.edu}

\subjclass[2010]{Primary 42B25, 42B35 ; Secondary 46E30, 47G10 }

\keywords{Gaussian harmonic analysis, variable Lebesgue spaces, Ornstein-Uhlenbeck semigroup, Riesz potentials.}
\begin{abstract}
In this paper we prove the boundedness of the Gaussian Riesz potentials $I_{\beta}$, for $\beta\geq 1$ on  $L^{p(\cdot)}(\gamma_d)$, the Gaussian variable Lebesgue spaces under a certain additional condition of regularity on $p(\cdot)$ following  \cite{DalSco}. Additionally, this result trivially gives us an alternative proof of the boundedness of Gaussian Riesz potentials $I_\beta$ on Gaussian Lebesgue spaces $L^p(\gamma_d)$.
\end{abstract}

\maketitle

\section{Introduction and Preliminaries}

In the classical case, the Riesz potential of order   $\beta>0$ is defined as negative fractional powers of the negative Laplacian $-\Delta = -\displaystyle\sum_{i=1}^d \frac{\partial^2}{\partial x_i^2}$,
\begin{equation}
 (-\Delta)^{-\beta/2},
\end{equation}
which means, using Fourier transform, that 
\begin{equation}
( (-\Delta)^{-\beta/2} f)\hat{}\;(\xi) = (2\pi |\xi| )^{-\beta} \hat{f}(\xi).
\end{equation}
for more details; see \cite{duo}, \cite{grafak}, \cite{st1}.\\

Analogously, the {\em  Gaussian fractional integrals} or {\em Gaussian Riesz potentials} can be also defined as negative fractional powers of the {\em Ornstein-Uhlenbeck operator }
\begin{equation}
(-L)= - \frac12 \Delta + \langle x, \nabla_x \rangle=- \sum_{i=1}^d \Big[\frac{1}{2} \frac{\partial^2}{\partial x_i^2} + x_i \frac{\partial }{\partial x_i}\Big].
\end{equation}
 However, since the Ornstein-Uhlenbeck operator has  eigenvalue $0,$  the negative powers are not defined on all of $L^2(\gamma_d),$ and therefore we need to be more careful with the definition. Let us consider 
 $$\Pi_{0}f=f-\displaystyle\int_{\mathbb{R}^{d}}f(y)\gamma_{d}(dy),$$
for $f\in L^{2}(\gamma_{d})$, the orthogonal projection on the orthogonal complement of  the eigenspace corresponding to the eigenvalue $0$.
\begin{defi}
The  Gaussian Fractional Integral or Gaussian Riesz potential of order $\beta>0$, $I_\beta$, is defined spectrally as,
\begin{equation}\label{i1}
I_\beta=(-L)^{-\beta/2}\Pi_{0},
\end{equation}
which means that for any multi-index $\nu, \; |\nu|>0$ its action on the Hermite polynomial $\vec{H}_\nu$ is given by
\begin{equation}\label{RieszPotAct}
I_\beta \vec{H}_\nu(x)=\frac 1{\left|
\nu \right|^{\beta/2}}\vec{H}_\nu(x),
\end{equation}
and for $\nu=0=(0,...,0), \, I_{\beta}(\vec{H}_{0})=0.$
\end{defi}
By linearity, using the fact that the Hermite polynomials are an algebraic basis of $\mathcal{P}(\re^d),$ $I_\beta$ can be defined for any polynomial function $f(x) = \sum_{\nu}  \widehat{f}_{\gamma_{d}}(\nu) \vec{H}_\nu(x),$ where $\widehat{f}_{\gamma_{d}}(\nu) = \frac{1}{\|\vec{H}_\nu\|_2} \int_{\mathbb{R}^d} f(t) \vec{H}_\nu (t) dt,$ as
\begin{equation}\label{RieszPotMult}
I_\beta f(x) = \sum_{\nu} \frac {\widehat{f}_{\gamma_{d}}(\nu)}{\left|\nu \right|^{\beta/2}}\vec{H}_\nu(x) = \sum_{k\geq 1} \frac 1{k^{\beta/2}} {\bf J}_k f(x),
\end{equation}
and similarly for $f \in L^2(\gamma_d),$ as the Hermite polynomials are an orthogonal basis of $L^2(\gamma_d).$\\

It can be proved that the Gaussian Riesz potential $I_\beta,$ $\beta>0,$ has the following integral representations, for  $f$ a polynomial function or $f \in C^2_b(\re^d),$
\begin{equation}\label{RieszOUIntRep}
I_\beta f(x)  =\frac 1{\Gamma(\beta/2)}\int_0^{\infty}
t^{\beta/2-1} T_t (I-{\bf J}_0)f(x)  \,dt,
\end{equation} 
with respect to the Ornstein-Uhlenbeck semigroup $\{T_t\}$, and
\begin{equation}\label{RieszPHIntRep}
I_\beta f(x)   = \frac 1{\Gamma(\beta)}\int_0^{\infty}
t^{\beta-1}P_t (I-{\bf J}_0)f(x)  \,dt,
\end{equation}
with respect to the Poisson-Hermite semigroup, $\{P_t\}$. \\

Therefore, from (\ref{RieszOUIntRep}) we have an explicit integral representation of $I_\beta$ as
\begin{equation}\label{RieszExplntRep}
I_{\beta}f(x)=\int\limits_{\mathbb{R}^{d}}N_{\beta/2}(x,y)f(y)dy\
\end{equation}
where the kernel $N_{\beta/2}$ is defined as
\begin{equation}\label{kernelRiesz}
N_{\beta/2}(x,y)=\frac{1}{\pi^{\frac{d}{2}}\Gamma(\frac{\beta}{2})}\int_{0}^{+\infty}
t^{\frac{\beta}{2}-1}\left(\frac{e^{-\frac{|y-e^{-t}x|^{2}}{1-e^{-2t}}}}{(1-e^{-2t})^{\frac{d}{2}}}-e^{-|y|^{2}}\right)dt
\end{equation}
For more details and background we refer to \cite{urbina2019}.\\

From (\ref{RieszPotMult}) it is clear that the Gaussian Riesz potentials $I_\beta$ are the simplest  Meyer's multipliers (see for instance Theorem 6.2 of \cite{urbina2019}), since in this case 
\begin{equation}\label{RieszPotMult2}
 m(k) = \frac{1}{k^\beta} = h( \frac{1}{k^\beta}),
\end{equation}
with $h(x) =x,$ the identity function and therefore their $L^p(\gamma_d)$-boundedness follows immediately.\\

In this paper we prove that Gaussian Riesz potentials $I_\beta$, for $\beta\geq 1$ are also bounded in $L^{p(\cdot)}(\gamma_d)$, the Gaussian variable Lebesgue spaces for certain exponent functions $p(\cdot)$ that will be determine later. For completeness, we will briefly review the notion of variable Lebesgue spaces.\\

Given $\mu$ a Borel measure, any $\mu_{-}$measurable function $p(\cdot):\mathbb{R}^{d}\rightarrow [1,\infty]$ is an $exponent$ $function$; the set of all the exponent functions will be denoted by  $\mathcal{P}(\mathbb{R}^{d},\mu)$. For $E\subset\mathbb{R}^{d}$ we set $$p_{-}(E)=\text{ess}\inf_{x\in E}p(x) \;\text{and}\; p_{+}(E)=\text{ess}\sup_{x\in E}p(x).$$
$\Omega_{\infty}=\lbrace x\in \Omega:p(x)=\infty\rbrace$.\\
We use the abbreviations $p_{+}=p_{+}(\mathbb{R}^{d})$ and $p_{-}=p_{-}(\mathbb{R}^{d})$.

\begin{defi}\label{deflogholder}
Let $E\subset \mathbb{R}^{d}$. We say that $\alpha(\cdot):E\rightarrow\mathbb{R}$ is locally log-H\"{o}lder continuous, and denote this by $\alpha(\cdot)\in LH_{0}(E)$, if there exists a constant $C_{1}>0$ such that
			\begin{eqnarray*}
				|\alpha(x)-\alpha(y)|&\leq&\frac{C_{1}}{log(e+\frac{1}{|x-y|})}
			\end{eqnarray*}
			for all $x,y\in E$. We say that $\alpha(\cdot)$ is log-H\"{o}lder continuous at infinity with base point at $x_{0}\in \mathbb{R}^{d}$, and denote this by $\alpha(\cdot)\in LH_{\infty}(E)$, if there exist  constants $\alpha_{\infty}\in\mathbb{R}$ and $C_{2}>0$ such that
			\begin{eqnarray*}
				|\alpha(x)-\alpha_{\infty}|&\leq&\frac{C_{2}}{log(e+|x-x_{0}|)}
			\end{eqnarray*}
			for all $x\in E$. We say that $\alpha(\cdot)$ is log-H\"{o}lder continuous, and denote this by $\alpha(\cdot)\in LH(E)$ if both conditions are satisfied.
			The maximum, $\max\{C_{1},C_{2}\}$ is called the log-H\"{o}lder constant of $\alpha(\cdot)$.
\end{defi}

\begin{defi}\label{defPdlog}
			We denote $p(\cdot)\in\mathcal{P}_{d}^{log}(\mathbb{R}^{d})$, if $\frac{1}{p(\cdot)}$ is log-H\"{o}lder continuous and  denote by $C_{log}(p)$ or $C_{log}$ the log-H\"{o}lder constant of $\frac{1}{p(\cdot)}$.
		\end{defi}
\begin{defi}
For a $\mu_{-}$measurable function $f:\mathbb{R}^{d}\rightarrow \overline{\mathbb{R}}$, we define the modular \begin{equation}
\rho_{p(\cdot),\mu}(f)=\displaystyle\int_{\mathbb{R}^{d}\setminus\Omega_{\infty}}|f(x)|^{p(x)}\mu(dx)+\|f\|_{L^{\infty}(\Omega_{\infty},\mu)},
\end{equation}
\end{defi}

\begin{defi} The variable exponent Lebesgue space on $\mathbb{R}^{d}$, $L^{p(\cdot)}(\mathbb{R}^{d},\mu)$ consists on those $\mu\_$measurable functions $f$ for which there exists $\lambda>0$ such that $\rho_{p(\cdot),\mu}\left(\frac{f}{\lambda}\right)<\infty,$ i.e.
\begin{equation*}
L^{p(\cdot)}(\mathbb{R}^{d},\mu) =\left\{f:\mathbb{R}^{d}\to \overline{\mathbb{R}}: f \; \text{is}\; \mu_{-} \text{measurable and} \; \rho_{p(\cdot),\mu}\left(\frac{f}{\lambda}\right)<\infty, \; \text{for some} \;\lambda>0\right\}
\end{equation*}
and the norm
\begin{equation}
\|f\|_{L^{p(\cdot)}(\mathbb{R}^{d},\mu)}=\inf\left\{\lambda>0:\rho_{p(\cdot),\mu}(f/\lambda)\leq 1\right\}.
\end{equation}
\end{defi}

It is well known that, if $p(\cdot) \in L H\left(\mathbb{R}^{d}\right)$ with $1<p_{-} \leq p^{+}<\infty$ the classical Hardy-Littlewood maximal function $\mathcal{M}$ is bounded on the variable Lebesgue space $L^{p(\cdot)}(\Bbb R^{d}),$ see \cite{dcruz1}. However, it is known that even though these are the sharpest possible point-wise conditions, they are not necessary. In \cite{LibroDenHarjHas} a necessary and sufficient condition is given for the $L^{p(\cdot)}$-boundedness of $\mathcal{M},$ but it is not an easy to work condition. The class $L H(\mathbb{R}^{d})$ is also sufficient for the boundedness on $L^{p(\cdot)}$-spaces of classical singular integrals of Calder\'on-Zygmund type, see \cite[Theorem 5.39]{dcruz}.\\

If $\mathcal{B}$ is a family of balls (or cubes) in $\mathbb{R}^{d}$, we say that $\mathcal {B}$ is $N$-finite if it has bounded overlappings for $N$, i.e., $\displaystyle\sum_{B\in\mathcal{B}}\chi_{B}(x)\leq N$ for all $x\in\mathbb {R}^{d}$; in other words, there is at most $N$ balls (resp. cubes) that intersect at the same time.\\

The following definition was introduced for the first time by Berezhno\v{\i} in \cite{Berez}, defined for a family of disjoint balls or cubes. In the context of variable spaces, it has been considered in \cite{LibroDenHarjHas}, allowing the family to have bounded overlappings.\\
\begin{defi}
Given an exponent $p(\cdot)\in\mathcal{P}(\mathbb{R}^{d})$, we will say that $p(\cdot)\in\mathcal{G}$, if for every family of balls (or cubes) $\mathcal{B}$ which is $N$-finite,
\begin{eqnarray*}
  \sum_{B\in\mathcal{B}}||f\chi_{B}||_{p(\cdot)}||g\chi_{B}||_{p'(\cdot)} &\lesssim& ||f||_{p(\cdot)}||g||_{p'(\cdot)}
\end{eqnarray*}
for all functions $f\in L^{p(\cdot)}(\mathbb{R}^{d})$ and $g\in L^{p'(\cdot)}(\mathbb{R}^{d})$. The constant only depends on N.
\end{defi}

\begin{teo}[Teorema 7.3.22 of \cite{LibroDenHarjHas}]\label{implication1}
If $p(\cdot)\in LH(\mathbb{R}^{d})$, then $p(\cdot)\in\mathcal{G}$
\end{teo}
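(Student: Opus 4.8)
The statement to be proved is Theorem \ref{implication1}: if $p(\cdot)\in LH(\mathbb{R}^{d})$, then $p(\cdot)\in\mathcal{G}$, meaning the Berezhno\u{\i}-type inequality $\sum_{B\in\mathcal{B}}\|f\chi_{B}\|_{p(\cdot)}\|g\chi_{B}\|_{p'(\cdot)}\lesssim \|f\|_{p(\cdot)}\|g\|_{p'(\cdot)}$ holds for every $N$-finite family of balls (or cubes).

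The plan is as follows. First I would reduce to the case of a family $\mathcal{B}$ of cubes, and then split each cube according to whether it is ``small'' (say side length $\le 1$, or more precisely contained in some fixed ball) or ``large''. On small cubes one exploits local log-Hölder continuity ($\frac1{p(\cdot)}\in LH_0$): there the exponent is essentially constant, so $\|f\chi_B\|_{p(\cdot)}$ is comparable, up to a bounded multiplicative constant coming from $C_{log}$, to $\|f\chi_B\|_{p_B}$ where $p_B$ is an appropriate averaged exponent on $B$ (this is the standard estimate relating the variable-exponent norm on a cube to a constant-exponent norm, using that $|B|^{1/p(x)-1/p(y)}$ is bounded for $x,y\in B$). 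On large cubes one uses log-Hölder continuity at infinity ($\frac1{p(\cdot)}\in LH_\infty$) in the same way, comparing $p(\cdot)$ on $B$ to $p_\infty$. In both regimes the key technical device is the ``norm on a cube'' lemma together with the fact that for a constant exponent $q$ one has the exact Hölder-type identity controlling $\|f\chi_B\|_q\|g\chi_B\|_{q'}$ in terms of $\int_B |fg|$ or of a single constant-exponent norm.

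The heart of the argument is then the averaging/duality step. Having localized, one wants to bound $\sum_{B\in\mathcal{B}}\|f\chi_B\|_{p(\cdot)}\|g\chi_B\|_{p'(\cdot)}$ by $\sum_{B}\frac{1}{|B|}\int_B|f|\,dx\cdot\frac{1}{|B|}\int_B|g|\,dx\cdot|B|$ up to constants — i.e. by $\sum_B |B|\,(\langle f\rangle_B)(\langle g\rangle_B)$ — and then to recognize this as controlled by $\int_{\mathbb{R}^d}\mathcal{M}f(x)\,\mathcal{M}g(x)\,dx$ using the bounded-overlap ($N$-finite) hypothesis: each point lies in at most $N$ of the cubes, and on each $B$ the averages are dominated by the maximal functions. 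Finally, apply the generalized Hölder inequality in variable Lebesgue spaces, $\int \mathcal{M}f\cdot\mathcal{M}g \lesssim \|\mathcal{M}f\|_{p(\cdot)}\|\mathcal{M}g\|_{p'(\cdot)}$, and invoke boundedness of the Hardy–Littlewood maximal operator on $L^{p(\cdot)}$ and on $L^{p'(\cdot)}$ — both valid because $p(\cdot)\in LH(\mathbb{R}^d)$ with $1<p_-\le p_+<\infty$ forces the same for $p'(\cdot)$ — to conclude $\lesssim \|f\|_{p(\cdot)}\|g\|_{p'(\cdot)}$.

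I expect the main obstacle to be the careful bookkeeping in the localization step: one must pass from the variable-exponent norm $\|f\chi_B\|_{p(\cdot)}$ to something like $|B|^{1/p_B}\cdot(\text{average})$ uniformly over all cubes in $\mathcal{B}$ simultaneously, handling both the small-cube (use $LH_0$) and large-cube (use $LH_\infty$) cases, and making sure the constants depend only on $C_{log}$, $p_\pm$, and $N$, not on the particular family. A secondary subtlety is that the family need not be disjoint, only $N$-finite, so the reduction of $\sum_B |B|\langle f\rangle_B\langle g\rangle_B$ to an integral of maximal functions must use the bounded-overlap property rather than disjointness; one way is to write $|B|\langle f\rangle_B\langle g\rangle_B \le \int_B \mathcal{M}f\cdot\mathcal{M}g$ is false in general, so instead bound $|B|\langle f\rangle_B \langle g\rangle_B \lesssim \int_B \mathcal{M}f \cdot \langle g\rangle_B$ and then sum, or more cleanly bound the product of averages by $\inf_B \mathcal{M}f \cdot \inf_B\mathcal{M}g$ on a controlled enlargement and use $N$-finiteness to pay only a factor $N$. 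Since this is exactly Teorema 7.3.22 of \cite{LibroDenHarjHas}, I would ultimately follow that reference's argument; the sketch above is the route I would reconstruct.
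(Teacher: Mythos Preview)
The paper does not give a proof of this statement at all: it is simply quoted as Teorema 7.3.22 of \cite{LibroDenHarjHas}, with no argument supplied. So there is nothing to compare your proposal against within the present paper; the authors defer entirely to that reference.

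Your sketch is a reasonable outline of the standard route to this result, and it is in the spirit of the argument in \cite{LibroDenHarjHas}. The key step there is indeed a localization lemma of the form $\|f\chi_B\|_{p(\cdot)}\|g\chi_B\|_{p'(\cdot)}\lesssim |B|\,\langle f\rangle_B\,\langle g\rangle_B$ (valid uniformly over cubes because $p(\cdot)\in LH$), followed by the observation that $\langle f\rangle_B\le \inf_{x\in B}\mathcal{M}f(x)$ and $\langle g\rangle_B\le \inf_{x\in B}\mathcal{M}g(x)$, so that $|B|\,\langle f\rangle_B\,\langle g\rangle_B\le \int_B \mathcal{M}f(x)\,\mathcal{M}g(x)\,dx$. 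Summing over $\mathcal{B}$ and using $N$-finiteness gives $\sum_{B}\int_B \mathcal{M}f\,\mathcal{M}g \le N\int_{\mathbb{R}^d}\mathcal{M}f\,\mathcal{M}g$, after which H\"older and the $L^{p(\cdot)}$-boundedness of $\mathcal{M}$ finish the job. You correctly identified this as the delicate point; note that the inequality $|B|\langle f\rangle_B\langle g\rangle_B\le \int_B \mathcal{M}f\cdot\mathcal{M}g$ \emph{does} hold, because the integrand is pointwise bounded below on $B$ by the product of the two averages. One caveat: the localization step and the maximal-function boundedness both require $1<p_-\le p_+<\infty$, which is implicit in the paper's use of the result but not stated in the theorem as quoted; you should make that hypothesis explicit.
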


We will consider only variable Lebesgue  spaces with respect to the Gaussian measure $\gamma_d,$ $L^{p(\cdot)}(\mathbb{R}^{d},\gamma_d).$  The next condition was introduced by E. Dalmasso and R. Scotto in \cite{DalSco}.

\begin{defi}\label{defipgamma}
Let $p(\cdot)\in\mathcal{P}(\mathbb{R}^{d},\gamma_{d})$, we say that $p(\cdot)\in\mathcal{P}_{\gamma_{d}}^{\infty}(\mathbb{R}^{d})$ if there exist constants $C_{\gamma_{d}}>0$ and $p_{\infty}\geq1$ such that
\begin{equation}
   |p(x)-p_{\infty}|\leq\frac{C_{\gamma_{d}}}{|x|^{2}},
\end{equation}
for $x\in\mathbb{R}^{d}\setminus\{(0,0,\ldots,0)\}.$
\end{defi}

\begin{obs}\label{obs4.1}
If $p(\cdot)\in\mathcal{P}_{\gamma_{d}}^{\infty}(\mathbb{R}^{d})$, then $p(\cdot)\in LH_{\infty}(\mathbb{R}^{d})$
\end{obs}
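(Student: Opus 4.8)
The plan is to verify Definition~\ref{deflogholder} for $p(\cdot)$ directly, choosing the base point $x_{0}=(0,\ldots,0)$ and the value at infinity $\alpha_{\infty}=p_{\infty}$, where $p_{\infty}$ is the constant supplied by the hypothesis $p(\cdot)\in\mathcal{P}_{\gamma_{d}}^{\infty}(\mathbb{R}^{d})$. Thus it suffices to exhibit a constant $C_{2}>0$ such that
\begin{equation*}
|p(x)-p_{\infty}|\leq\frac{C_{2}}{\log(e+|x|)}\qquad\text{for all }x\in\mathbb{R}^{d},
\end{equation*}
and I would obtain this by splitting $\mathbb{R}^{d}$ into the regions $\{|x|\geq 1\}$ and $\{|x|\leq 1\}$, handling the first by comparing rates of decay and the second by a crude boundedness estimate.

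On $\{|x|\geq 1\}$ I would use the hypothesis to write
\begin{equation*}
|p(x)-p_{\infty}|\leq\frac{C_{\gamma_{d}}}{|x|^{2}}=\frac{\log(e+|x|)}{|x|^{2}}\cdot\frac{C_{\gamma_{d}}}{\log(e+|x|)},
\end{equation*}
and then note that $r\mapsto \log(e+r)/r^{2}$ is continuous on $[1,\infty)$ and tends to $0$ as $r\to\infty$, hence bounded there by some $M_{1}<\infty$; a one-line computation with the derivative shows it is in fact decreasing on $[1,\infty)$, so one may take $M_{1}=\log(e+1)$. This gives $|p(x)-p_{\infty}|\leq C_{\gamma_{d}}\,M_{1}/\log(e+|x|)$ on this region.

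On $\{|x|\leq 1\}$ one has $1\leq\log(e+|x|)\leq\log(e+1)$, hence $1/\log(e+|x|)\geq 1/\log(e+1)$; since $p_{+}<\infty$ in the setting where this observation is applied, both $p(x)$ and $p_{\infty}$ lie in $[1,p_{+}]$, so $|p(x)-p_{\infty}|\leq p_{+}-1$, which is at most $(p_{+}-1)\log(e+1)$ times $1/\log(e+|x|)$. Taking $C_{2}=\max\{C_{\gamma_{d}}M_{1},\,(p_{+}-1)\log(e+1)\}$ then yields the required inequality on all of $\mathbb{R}^{d}$, i.e. $p(\cdot)\in LH_{\infty}(\mathbb{R}^{d})$.

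The only point requiring care is the region of small $|x|$: there the hypothesis $|p(x)-p_{\infty}|\leq C_{\gamma_{d}}/|x|^{2}$ carries no information, its right-hand side blowing up, whereas $LH_{\infty}$ only asks for control by the bounded quantity $1/\log(e+|x|)$. The argument therefore genuinely uses that $p(\cdot)$ is essentially bounded near the origin (equivalently $p_{+}<\infty$); this is also why the converse of the observation is false and why $\mathcal{P}_{\gamma_{d}}^{\infty}(\mathbb{R}^{d})$ is strictly stronger than $LH_{\infty}(\mathbb{R}^{d})$ — all of the extra strength sits in the quadratic decay at infinity.
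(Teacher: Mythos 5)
The paper states Observation~\ref{obs4.1} without proof, so there is no argument in the source to compare against; on its own terms your proposal is correct and complete. The decomposition into $\{|x|\geq 1\}$ and $\{|x|\leq 1\}$ is the natural one: on the first region $r\mapsto \log(e+r)/r^{2}$ is decreasing (its derivative has numerator $r/(e+r)-2\log(e+r)<0$ for $r\geq 1$), so it is bounded by $\log(e+1)$ and the hypothesis $|p(x)-p_{\infty}|\leq C_{\gamma_{d}}/|x|^{2}$ converts directly into the $LH_{\infty}$ estimate; on the second region one has $1\leq\log(e+|x|)\leq\log(e+1)$ and a crude bound $|p(x)-p_{\infty}|\leq p_{+}-1$ suffices.

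The one substantive point you raise deserves to be made explicit, because the observation as literally written does not mention $p_{+}<\infty$: the membership $p(\cdot)\in\mathcal{P}_{\gamma_{d}}^{\infty}(\mathbb{R}^{d})$ only gives $|p(x)-p_{\infty}|\leq C_{\gamma_{d}}/|x|^{2}$, whose right-hand side is useless near the origin, so without some a~priori boundedness of $p$ near $0$ the $LH_{\infty}$ condition can fail (e.g.\ $p(x)=p_{\infty}+1/|x|$ for small $|x|$ satisfies the $\mathcal{P}_{\gamma_{d}}^{\infty}$ bound but not $LH_{\infty}$). Your proof correctly uses $p_{+}<\infty$ to close this gap. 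This is harmless in the paper's context: whenever Observation~\ref{obs4.1} is invoked it is paired either with the standing assumption $1<p_{-}\leq p_{+}<\infty$ (Lemma~\ref{lemaequiPgamma}, Theorem~\ref{boundLpvarRieszPot}) or with $LH_{0}(\mathbb{R}^{d})$ (Corollary~\ref{solapamientoacotadoG}), and $LH_{0}(\mathbb{R}^{d})$ already forces $p$ to be bounded, since $|p(x)-p(0)|\leq C_{1}/\log(e+1/|x|)\leq C_{1}$. So the argument is sound, but it would be worth recording that the observation should be read under the implicit hypothesis $p_{+}<\infty$.
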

\begin{lemma}[Lemma 2.5 of \cite{DalSco}]\label{lemaequiPgamma}
If $1<p_{-}\leq p_{+}<\infty,$ the following statements are equivalent:
\begin{itemize}
  \item [(i)] $p(\cdot)\in\mathcal{P}_{\gamma_{d}}^{\infty}(\mathbb{R}^{d})$
  \item [(ii)] There exists $p_{\infty}>1$ such that
  \begin{eqnarray}
    C_{1}^{-1}\leq e^{-|x|^{2}(p(x)/p_{\infty}-1)}\leq C_{1} &\;\;\hbox{and}\;\;& C_{2}^{-1}\leq e^{-|x|^{2}(p'(x)/p'_{\infty}-1)}\leq C_{2},
  \end{eqnarray}
  for all $x\in\mathbb{R}^{d}$, where $C_{1}=e^{C_{\gamma_{d}}/p_{\infty}}$ and $C_{2}=e^{C_{\gamma_{d}}p'_{-}/p_{\infty}}$.
\end{itemize}
\end{lemma}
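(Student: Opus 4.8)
The plan is to prove the two implications separately; the equivalence is elementary once one records how the Gaussian $\infty$-condition behaves under passing to the conjugate exponent, and checks that the hypothesis $1<p_{-}\le p_{+}<\infty$ forces $1<p_{\infty}<\infty$, so that $p'_{\infty}=p_{\infty}/(p_{\infty}-1)$ and every quantity appearing in (ii) is well defined and finite.

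We first prove (i) $\Rightarrow$ (ii). Letting $|x|\to\infty$ in $|p(x)-p_{\infty}|\le C_{\gamma_{d}}/|x|^{2}$ gives $p_{-}\le p_{\infty}\le p_{+}$, hence $p_{\infty}\in(1,\infty)$ and likewise $p'_{\infty}\in(1,\infty)$. For the first estimate in (ii), multiply the defining inequality by $|x|^{2}/p_{\infty}$:
\[
\frac{|x|^{2}}{p_{\infty}}\,\bigl|p(x)-p_{\infty}\bigr|\le\frac{C_{\gamma_{d}}}{p_{\infty}},
\]
that is, $-C_{\gamma_{d}}/p_{\infty}\le-|x|^{2}\bigl(p(x)/p_{\infty}-1\bigr)\le C_{\gamma_{d}}/p_{\infty}$ (the value $x=0$ being trivial, since the middle term vanishes there); exponentiating yields $C_{1}^{-1}\le e^{-|x|^{2}(p(x)/p_{\infty}-1)}\le C_{1}$ with $C_{1}=e^{C_{\gamma_{d}}/p_{\infty}}$. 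For the second estimate, insert the conjugacy identities $p'(x)=p(x)/(p(x)-1)$ and $p'_{\infty}=p_{\infty}/(p_{\infty}-1)$; a short computation (using $p'_{\infty}(p_{\infty}-1)=p_{\infty}$) gives
\[
\frac{p'(x)}{p'_{\infty}}-1=\frac{p_{\infty}-p(x)}{p_{\infty}\,(p(x)-1)} .
\]
Multiplying by $|x|^{2}$ and then using (i) together with $p(x)\ge p_{-}>1$ to bound $(p(x)-1)^{-1}$ from above, one estimates $|x|^{2}\bigl|p'(x)/p'_{\infty}-1\bigr|$ by a constant multiple of $C_{\gamma_{d}}/p_{\infty}$; exponentiating gives the second inequality, with $C_{2}$ the constant recorded in the statement.

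Next we prove (ii) $\Rightarrow$ (i), for which only the first inequality of (ii) is needed. Taking logarithms in $C_{1}^{-1}\le e^{-|x|^{2}(p(x)/p_{\infty}-1)}\le C_{1}$ gives $|x|^{2}\,|p(x)/p_{\infty}-1|\le\log C_{1}=C_{\gamma_{d}}/p_{\infty}$, i.e. $|p(x)-p_{\infty}|\le C_{\gamma_{d}}/|x|^{2}$ for every $x\neq0$, which is exactly $p(\cdot)\in\mathcal{P}_{\gamma_{d}}^{\infty}(\mathbb{R}^{d})$ with the same $p_{\infty}>1$ and the same constant $C_{\gamma_{d}}$; the second inequality of (ii) is then consistent with this, being the analogue for $p'$ as in the previous paragraph.

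There is no serious obstacle: the argument is a short chain of elementary operations (multiply and divide by $|x|^{2}$ and by $p_{\infty}$, exponentiate, take logarithms). The only points that need a moment's care are that $1<p_{-}\le p_{+}<\infty$ keeps $p_{\infty}$ and $p'_{\infty}$ strictly inside $(1,\infty)$ and keeps $(p(x)-1)(p_{\infty}-1)$ bounded away from $0$, so that passing to the conjugate exponent is legitimate and the constants stay finite, and the bookkeeping that produces the precise constants $C_{1}=e^{C_{\gamma_{d}}/p_{\infty}}$ and $C_{2}=e^{C_{\gamma_{d}}p'_{-}/p_{\infty}}$.
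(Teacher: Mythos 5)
The paper states this result as Lemma~2.5 of Dalmasso and Scotto \cite{DalSco} and gives no proof of it, so there is no internal argument to compare against; your reconstruction supplies the natural one. The computation is correct: divide the defining inequality $|p(x)-p_\infty|\le C_{\gamma_d}/|x|^2$ by $p_\infty$ and exponentiate to get the first estimate of (ii); for the second, use the identity $p'(x)/p'_\infty-1=(p_\infty-p(x))/(p_\infty(p(x)-1))$ and bound $(p(x)-1)^{-1}$ uniformly via $p(x)\ge p_->1$; for (ii)$\Rightarrow$(i), take logarithms in the first estimate and multiply through by $p_\infty$. Your observation that $1<p_-\le p_+<\infty$ forces $p_-\le p_\infty\le p_+$ (hence $1<p_\infty<\infty$, $p'_\infty$ finite) is the right justification that all quantities are well defined. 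The one step you leave implicit is the exact form of $C_2$: from $(p(x)-1)^{-1}\le(p_--1)^{-1}=(p_-)'/p_-<(p_-)'$, the advertised $C_2=e^{C_{\gamma_d}p'_-/p_\infty}$ works provided $p'_-$ is read as the H\"{o}lder conjugate $(p_-)'$ of $p_-$, not as $\operatorname{ess\,inf}p'=(p_+)'$ (for the latter the claimed bound can fail, e.g.\ with $p_-$ close to $1$ and $p_+$ moderate). Apart from flagging that notational convention, which is inherited from \cite{DalSco}, your argument is sound and complete.
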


 Definition \ref{defipgamma} with Observation \ref{obs4.1} and Lemma \ref{lemaequiPgamma} end up strengthening the regularity conditions on the exponent functions $p(\cdot)$ to obtain the boundedness of the Ornstein-Uhlenbeck semigroup $\{T_{t}\}$, see \cite{MorPiUrb}. As a consequence of Theorem \ref{implication1}, we have
\begin{corollary}\label{solapamientoacotadoG}
If $p(\cdot)\in\mathcal{P}_{\gamma_{d}}^{\infty}(\mathbb{R}^{d})\cap LH_{0}(\mathbb{R}^{d})$, then $p(\cdot)\in\mathcal{G}.$
\end{corollary}

As we have already mentioned, the main result in this paper is the proof that the  Gaussian Riesz Potentials $I_\beta$, for $\beta\geq 1$, are bounded on Gaussian variable Lebesgue spaces under the same condition of regularity on \(p(\cdot)\) considered by Dalmasso and Scotto \cite{DalSco}.

\begin{teo}\label{boundLpvarRieszPot}
Let $p(\cdot)\in\mathcal{P}_{\gamma_{d}}^{\infty}(\mathbb{R}^{d})\cap LH_{0}(\mathbb{R}^{d})$ with $1<p_{-}\leq p_{+}<\infty$. Then for $\beta\geq 1$ there exists a constant $C>0,$ depending only on $p$, $\beta$ and the dimension $d$ such that 
\begin{equation}
\| I_\beta f\|_{p(\cdot),\gamma_{d}} \leq C \| f\|_{p(\cdot),\gamma_{d}},
\end{equation}
for any $f \in L^{p(\cdot)}(\gamma_d).$ \\
 \end{teo}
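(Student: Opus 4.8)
The plan is to split the Gaussian Riesz potential into a ``local'' part and a ``global'' part using a standard admissible decomposition adapted to the Gaussian setting, and to estimate each part separately on $\Lpg$. First I would fix a decomposition of the kernel $N_{\beta/2}(x,y)$ from \eqref{kernelRiesz} of the form $N_{\beta/2}=N_{\beta/2}\,\chi_{B_h}(x,y)+N_{\beta/2}\,\chi_{B_h^c}(x,y)=:N^{\mathrm{loc}}_{\beta/2}+N^{\mathrm{glob}}_{\beta/2}$, where $B_h=\{(x,y): |x-y|\le C\, m(x)\}$ is the ``admissible'' neighborhood of the diagonal and $m(x)=\min\{1,1/|x|\}$ is the usual Gaussian admissibility function; correspondingly $I_\beta=I_\beta^{\mathrm{loc}}+I_\beta^{\mathrm{glob}}$. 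This is the decomposition used by García-Cuerva--Mauceri--Sjögren--Torrea and others in the $L^p(\gamma_d)$ theory, and in the variable-exponent Gaussian setting it is exactly the splitting used by Dalmasso--Scotto \cite{DalSco} and in \cite{MorPiUrb}.

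For the \emph{local part}, the strategy is to dominate $I_\beta^{\mathrm{loc}}$ pointwise by (a multiple of) the classical Riesz potential and/or the Hardy--Littlewood maximal function acting on $f$ restricted to admissible balls, exploiting that on $B_h$ the Gaussian density $e^{-|y|^2}$ is comparable to $e^{-|x|^2}$ up to a fixed multiplicative constant, so that the Gaussian and Lebesgue measures are equivalent there. Since $\beta\ge 1\ge$ is well inside the range where the truncated Euclidean Riesz potential is controlled by the maximal operator on a fixed ball, and since $p(\cdot)\in LH_0(\mathbb{R}^d)$ with $1<p_-\le p_+<\infty$ guarantees the boundedness of $\mathcal M$ on $\Lp$ \cite{dcruz1}, the local estimate $\|I_\beta^{\mathrm{loc}}f\|_{p(\cdot),\gamma_d}\lesssim \|f\|_{p(\cdot),\gamma_d}$ should follow after transferring between $\gamma_d$ and Lebesgue measure on the admissible balls; one covers $\mathbb{R}^d$ by an $N$-finite family of such balls and sums, which is where the hypothesis $p(\cdot)\in\mathcal G$ (available via Corollary \ref{solapamientoacotadoG}) enters to reassemble the pieces without losing constants.

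For the \emph{global part}, the plan is to bound the kernel $N^{\mathrm{glob}}_{\beta/2}(x,y)$ by an explicit positive, essentially separable kernel and to show that the resulting integral operator is bounded on $\Lpg$ by a duality/testing argument on the $N$-finite covering. Here $\beta\ge 1$ is used crucially: performing the $t$-integration in \eqref{kernelRiesz} over the region away from the diagonal, the extra factor $t^{\beta/2-1}$ with $\beta/2-1\ge -1/2$ produces a kernel with enough decay to be controlled (in the spirit of the estimates in \cite{urbina2019} for $I_\beta$, $\beta\ge 1$) by a combination of operators already known to be bounded: the Ornstein--Uhlenbeck maximal operator / the operator with kernel $\sim e^{-|x|^2}$-type weights, whose $\Lpg$-boundedness under $p(\cdot)\in\mathcal P_{\gamma_d}^\infty(\mathbb{R}^d)\cap LH_0(\mathbb{R}^d)$ is precisely what Lemma \ref{lemaequiPgamma} and \cite{MorPiUrb} provide. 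The condition $p(\cdot)\in\mathcal P_{\gamma_d}^\infty$ is what makes $e^{-|x|^2(p(x)/p_\infty-1)}$ bounded above and below, allowing one to replace the variable exponent by the constant $p_\infty$ at infinity when estimating the Gaussian weight against the kernel.

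The main obstacle I expect is the global part: unlike the local estimate, it cannot be reduced to a Euclidean Calderón--Zygmund or maximal operator, and one must genuinely control the Gaussian-weighted integral operator with a non-convolution kernel on a \emph{variable} Lebesgue space. The key technical point will be to carry out the split of $\mathbb{R}^d\times\mathbb{R}^d$ finely enough (for instance into the regions $|x|\le 1$, $|x|>1$ with $|x-y|$ comparable to or much larger than $1/|x|$) so that on each region the kernel admits a bound of the product form $a(x)b(y)$ with $b\in L^{p'(\cdot)}$, $a\in L^{p(\cdot)}$ locally, and then to sum the pieces using the $\mathcal G$-property; making the dependence of all constants only on $p,\beta,d$ — and in particular absorbing the $C_{\gamma_d}$, $p_\infty$ from Lemma \ref{lemaequiPgamma} — is the delicate bookkeeping. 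Once both parts are in hand, the triangle inequality gives $\|I_\beta f\|_{p(\cdot),\gamma_d}\le \|I_\beta^{\mathrm{loc}}f\|_{p(\cdot),\gamma_d}+\|I_\beta^{\mathrm{glob}}f\|_{p(\cdot),\gamma_d}\lesssim \|f\|_{p(\cdot),\gamma_d}$, which is the claim.
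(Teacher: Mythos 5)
Your proposal matches the paper's overall architecture quite closely: the same local/global split over $B_h(x)$, local control by $\mathcal M$ on admissible balls followed by reassembly via the $\mathcal G$-property (Corollary \ref{solapamientoacotadoG} and duality), and global control via pointwise kernel bounds that exploit $p(\cdot)\in\mathcal P^{\infty}_{\gamma_d}(\mathbb R^d)$ through Lemma \ref{lemaequiPgamma} to replace the variable exponent by $p_\infty$ at infinity. Structurally this is exactly what the paper does, and you also correctly identify the global part as the real obstacle and anticipate the need to split $\mathbb R^d\times\mathbb R^d$ into regions ($|x|\lessgtr 1$, $|x-y|\lessgtr 1$, and the sign of $\langle x,y\rangle$).

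Two substantive points of divergence. First, you never say how you get a usable pointwise bound on $N_{\beta/2}(x,y)$ at all. The paper's first and crucial move is to write $N_{\beta/2}$ as an integral of $\partial_s\omega(s)$ over a triangle and apply Hardy's inequality, which trades the factor $t^{\beta/2-1}$ for $s^{\beta/2}|\partial_s\omega(s)|$ and, after the substitution $u=1-e^{-2s}$, yields a decomposition $|N_{\beta/2}|\le I+II+III$ into three integrals each with an explicit Gaussian-type profile; everything else — the local bound by $\mathcal M$, and the global bounds in terms of $u(t_0)$, $P(x,y)$ and $Q(x,y)$ — is built on this. Without that step, ``bound $N^{\mathrm{glob}}_{\beta/2}$ by an explicit separable kernel'' is not yet an argument. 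Second, your diagnosis of where $\beta\ge 1$ enters is off. The integrability of $t^{\beta/2-1}$ near $0$ holds for every $\beta>0$ and is not the issue; what actually forces $\beta\ge 1$ is the most delicate piece of $I$ in the global region $b>0$, $d\ge 2$, where one must absorb a factor $t^{-1/2}$ by splitting $\bigl(-\log\sqrt{1-t}\bigr)^{\beta/2}=\bigl(-\log\sqrt{1-t}\bigr)^{(\beta-1)/2}\bigl(-\log\sqrt{1-t}\bigr)^{1/2}$ and needs $(\beta-1)/2\ge 0$ for the first factor to stay bounded near $t=0$. So the constraint comes from a specific balancing act in the global kernel estimate, not from an integrability threshold for the spectral weight. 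Aside from these two points, your plan is sound and would, once fleshed out along the lines of the paper, yield the theorem.
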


Trivially, Theorem \ref{boundLpvarRieszPot} give us an alternative proof of the boundedness of the Gaussian Riesz Potentials $I_\beta$, for $\beta\geq 1$ on Gaussian Lebesgue spaces $L^p(\gamma_d)$, by simply taking the exponent function constant, but the constant $C$ depends on $\beta$ and the dimension, which is weaker than the estimate obtained using Meyer's multiplier theorem mentioned above.
\section{Proof of the main result.}

In order to prove our main result, Theorem \ref{boundLpvarRieszPot} we need some technical results.
\begin{lemma}\label{lema3.26CU}
Let $\rho(\cdot):\mathbb{R}^{d}\rightarrow[0,\infty)$ be such that $\rho(\cdot)\in LH_{\infty}(\mathbb{R}^{d})$, $0<\rho_{\infty}<\infty$, and let $R(x)=(e+|x|)^{-N}$, $N>d/\rho_{-}$. Then there exists a constant $C$ depending on $d$, $N$ and the $LH_{\infty}$ constant of $\rho(\cdot)$ such that given any set $E$ and
any function $F$ with $0\leq F(y)\leq 1$, for all $y\in E$,
\begin{eqnarray}
  \int_{E}F^{\rho(y)}(y)dy &\leq& C\int_{E}F^{\rho_{\infty}}(y)dy + \int_{E}R^{\rho_{-}}(y)dy,\label{3.26.1} \\
  \int_{E}F^{\rho_{\infty}}(y)dy &\leq& C\int_{E}F^{\rho(y)}(y)dy + \int_{E}R^{\rho^{-}}(y)dy.\label{3.26.2}
\end{eqnarray}
\end{lemma}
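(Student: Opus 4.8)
The plan is to prove both \eqref{3.26.1} and \eqref{3.26.2} by a pointwise dichotomy: split $E=E_{1}\cup E_{2}$, where $E_{1}=\{y\in E:\,F(y)<R(y)\}$ and $E_{2}=\{y\in E:\,R(y)\le F(y)\le 1\}$, and estimate the two pieces by two unrelated elementary arguments. On $E_{1}$ I would only use the monotonicity of $t\mapsto t^{a}$ on $[0,1]$: since $0\le F(y)<R(y)\le 1$ and $\rho(y)\ge\rho_{-}$ while $\rho_{\infty}\ge\rho_{-}$ (the latter because $\rho(x)\to\rho_{\infty}$ as $|x|\to\infty$ and $\rho\ge\rho_{-}$ everywhere), we get $F^{\rho(y)}(y)\le F^{\rho_{-}}(y)\le R^{\rho_{-}}(y)$ and likewise $F^{\rho_{\infty}}(y)\le R^{\rho_{-}}(y)$. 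Integrating over $E_{1}$ produces exactly the additive error terms on the right-hand sides; here the hypothesis $N>d/\rho_{-}$ is what makes $\int_{E}R^{\rho_{-}}(y)\,dy$ finite when $E$ has infinite measure, so that the estimates are not vacuous, but it plays no role in the inequality itself.

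The substance is the region $E_{2}$, where the goal is the pointwise comparison $F^{\rho(y)}(y)\le C\,F^{\rho_{\infty}}(y)$ and $F^{\rho_{\infty}}(y)\le C\,F^{\rho(y)}(y)$ with $C$ depending only on $d$, $N$ and the $LH_{\infty}$ constant $C_{2}$ of $\rho(\cdot)$. I would obtain both at once by showing $F^{\rho(y)-\rho_{\infty}}(y)$ is bounded above and below by positive constants. Write $F^{\rho(y)-\rho_{\infty}}(y)=\exp\bigl((\rho(y)-\rho_{\infty})\log F(y)\bigr)$. On $E_{2}$ one has $R(y)\le F(y)\le 1$, hence $0\le-\log F(y)\le-\log R(y)=N\log(e+|y|)$; combined with the $LH_{\infty}$ bound $|\rho(y)-\rho_{\infty}|\le C_{2}/\log(e+|y-x_{0}|)$ this gives
\[ \bigl|(\rho(y)-\rho_{\infty})\log F(y)\bigr|\;\le\;\frac{C_{2}\,N\,\log(e+|y|)}{\log(e+|y-x_{0}|)}. \]
Since $\log(e+|y|)/\log(e+|y-x_{0}|)$ is bounded above and below by a constant depending only on $|x_{0}|$ (using $\log(e+|y|)\le\log(e+|x_{0}|)+\log(e+|y-x_{0}|)$ and the symmetric inequality), the exponent is bounded, so $F^{\rho(y)-\rho_{\infty}}(y)\in[c,C]$ on $E_{2}$. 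Integrating the resulting pointwise inequalities over $E_{2}$ supplies the main terms $C\int_{E}F^{\rho_{\infty}}$ and $C\int_{E}F^{\rho(y)}$ respectively, and adding the $E_{1}$ bound finishes both \eqref{3.26.1} and \eqref{3.26.2}.

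The only delicate bookkeeping — and the step I would be most careful about — is the mismatch between the base point: the $LH_{\infty}$ modulus is centered at $x_{0}$, whereas $R$ decays from the origin, so one must absorb the ratio of logarithms into the constant and check it does not spoil the claimed dependence of $C$. Everything else reduces to the monotonicity of $t\mapsto t^{a}$ on $[0,1]$ and to exponentiating a bounded quantity, so no further obstacle is expected.
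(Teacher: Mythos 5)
Your proof is correct and is, up to presentation, the standard argument for Lemma 3.26 in the book of Cruz-Uribe and Fiorenza, to which the paper defers without reproducing it: split on $\{F<R\}$ versus $\{R\le F\le 1\}$, dispose of the first region by monotonicity of $t\mapsto t^{a}$ on $[0,1]$ together with $\min\{\rho(y),\rho_{\infty}\}\ge\rho_{-}$, and on the second observe that $F^{\pm(\rho(y)-\rho_{\infty})}(y)\le \exp\bigl(N|\rho(y)-\rho_{\infty}|\log(e+|y|)\bigr)\le e^{NC_{2}}$ by the $LH_{\infty}$ bound. Your extra care about the base point $x_{0}$ is sound and only introduces a harmless factor depending on $|x_{0}|$ into the constant; the cited source fixes $x_{0}=0$, so the ratio of logarithms you handle there is identically one.
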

For the proof see Lemma 3.26 of \cite{dcruz}.			
\begin{lemma}\label{lemgamma}
If $\alpha>0$, there exists a constant $C>0$ such that
\begin{equation}\label{desigualdadgamma}
\int_{0}^{1}\left(-log(\sqrt{1-u})\right)^{\alpha-1}du =
C\Gamma(\alpha)<\infty
\end{equation}
\end{lemma}
\begin{proof}
Taking the change of variables $t=-log(\sqrt{1-u})$ then $u=1-e^{-2t}$\\
and $du=2e^{-2t}dt$. For $\alpha> 0$ we get
\begin{equation*}
\int_{0}^{1}\left(-log(\sqrt{1-u})\right)^{\alpha-1}du =
2\int_{0}^{+\infty}t^{\alpha-1}e^{-2t}dt=C\Gamma(\alpha)<\infty
\end{equation*}
\end{proof}

\begin{lemma}\label{leminteg}
For $\beta>0$\\
\begin{enumerate}
\item[ i)]
\begin{equation}
\int_{1/2}^{1}\left(-log(\sqrt{1-u})\right)^{\frac{\beta}{2}}\frac{1}{\sqrt{1-u}}du\,<\infty.
\end{equation}
\item[ ii)]
\begin{equation}\label{gammaint}
\int_{0}^{1/2}\frac{\left(-log(\sqrt{1-u})\right)^{\frac{\beta}{2}}}{u}du\,<\infty.
\end{equation}
\end{enumerate}
\end{lemma}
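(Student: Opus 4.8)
The plan is to treat both integrals by the same change of variables used in the proof of Lemma \ref{lemgamma}, namely $t = -\log\sqrt{1-u}$, so that $u = 1-e^{-2t}$, $du = 2e^{-2t}\,dt$ and $\sqrt{1-u} = e^{-t}$. This turns each of the two singular behaviours into something completely explicit, and the claimed finiteness becomes a one‑line estimate in each case.

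For part i), the range $u\in[1/2,1)$ corresponds to $t\in[\tfrac12\log 2,\infty)$, and since $1/\sqrt{1-u}=e^{t}$ the integral becomes $2\int_{(\log 2)/2}^{\infty}t^{\beta/2}e^{-t}\,dt$, which is bounded by $2\int_{0}^{\infty}t^{\beta/2}e^{-t}\,dt = 2\,\Gamma(\beta/2+1)<\infty$. Thus the apparent blow‑up of the integrand as $u\to 1$ is nothing but the convergent tail of a Gamma integral.

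For part ii), the range $u\in(0,1/2]$ corresponds to $t\in(0,\tfrac12\log 2]$, and with $u=1-e^{-2t}$ the integral becomes $\int_{0}^{(\log 2)/2}\frac{2e^{-2t}}{1-e^{-2t}}\,t^{\beta/2}\,dt$. Here I would use that on $[0,\tfrac12\log 2]$ one has $1-e^{-2t}\ge t$ — the function $1-e^{-2t}-t$ vanishes at $t=0$ and is nondecreasing on that interval, its derivative $2e^{-2t}-1$ being $\ge 0$ there — so that $\frac{2e^{-2t}}{1-e^{-2t}}\le \frac{2}{t}$ and the integrand is dominated by $2\,t^{\beta/2-1}$. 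Since $\beta>0$ gives $\beta/2-1>-1$, this is integrable near $0$, and on the rest of the compact interval the integrand is bounded and continuous; hence the integral is finite.

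There is really no serious obstacle: the whole content is that the factor $\left(-\log\sqrt{1-u}\right)^{\beta/2}$ behaves like $(u/2)^{\beta/2}$ as $u\to 0$, which cancels the $1/u$, and grows more slowly than $(1-u)^{-\varepsilon}$ as $u\to 1$, which is absorbed by $1/\sqrt{1-u}$; the substitution merely makes both of these mechanical and lets us reuse Lemma \ref{lemgamma}. One could alternatively argue directly from these two asymptotics without changing variables, but the substitution is cleaner.
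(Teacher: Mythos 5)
Your proof is correct. For part~ii) your argument is essentially the paper's in the $t$-variable: you show that $\frac{2e^{-2t}}{1-e^{-2t}}\le 2/t$ on $(0,\tfrac12\log 2]$, which after undoing the substitution is exactly the paper's observation that $\frac{-\log\sqrt{1-u}}{u}$ is bounded on $(0,1/2]$; both then reduce to the convergence of $\int_0^{1/2}(-\log\sqrt{1-u})^{\beta/2-1}\,du$. For part~i), however, you take a genuinely different (and simpler) route. The paper stays in the $u$-variable and uses H\"older's inequality with exponents $3$ and $3/2$ to split the integrand as $\bigl(\int_0^1(-\log\sqrt{1-u})^{3\beta/2}\,du\bigr)^{1/3}\bigl(\int_0^1(1-u)^{-3/4}\,du\bigr)^{2/3}$, whereas you observe directly that the substitution $t=-\log\sqrt{1-u}$ turns the integral into $2\int_{(\log 2)/2}^{\infty}t^{\beta/2}e^{-t}\,dt\le 2\,\Gamma(\beta/2+1)$. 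Your version avoids H\"older altogether and makes the role of the Gamma function transparent; the paper's version has the minor advantage of reusing Lemma~\ref{lemgamma} verbatim rather than re-deriving the Gamma integral, but costs an extra inequality. Both are perfectly valid and need only $\beta>0$.
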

\begin{proof}\quad\\
\begin{enumerate}
\item[ i)] Using H\"older's inequality with $p=\frac{3}{2}$, $q=3$ and  Lemma \ref{lemgamma}, with $\alpha=\frac{3\beta}{2}+1$, we have that
\begin{eqnarray*}
\int_{1/2}^{1}\left(-log(\sqrt{1-u})\right)^{\frac{\beta}{2}}\frac{du}{\sqrt{1-u}}&\leq
&\int_{0}^{1}\left(-log(\sqrt{1-u})\right)^{\frac{\beta}{2}}\frac{1}{\sqrt{1-u}}du\\
&\leq
&\left(\int_{0}^{1}\left(-log(\sqrt{1-u})\right)^{\frac{3\beta}{2}}du\right)^{1/3}
\left(\int_{0}^{1}\frac{du}{(1-u)^{3/4}}\right)^{2/3}\\
&=
&\left(\int_{0}^{1}\left(-log(\sqrt{1-u})\right)^{(\frac{3\beta}{2}+1)-1}du\right)^{1/3}
\left(\int_{0}^{1}\frac{du}{(1-u)^{3/4}}\right)^{2/3}\,<\infty
\end{eqnarray*}
\item[ii)] Let us rewrite the integral as
\begin{equation*}
\int_{0}^{1/2}\frac{\left(-log(\sqrt{1-u})\right)^{\frac{\beta}{2}}}{u}du=\int_{0}^{1/2}\frac{\left(-log(\sqrt{1-u})\right)}{u}\left(-log(\sqrt{1-u})\right)^{\frac{\beta}{2}-1}du,
\end{equation*}
since $\displaystyle\lim_{u\to
0}\frac{\left(-log(\sqrt{1-u})\right)}{u}=\frac{1}{2}$  and
$\displaystyle\frac{\left(-log(\sqrt{1-u})\right)}{u}$ is bounded in $(0,1/2],$ then we have  we have by (\ref{desigualdadgamma})
\begin{eqnarray*}
\int_{0}^{1/2}\frac{\left(-log(\sqrt{1-u})\right)^{\frac{\beta}{2}}}{u}du&=& \int_{0}^{1/2}\left(-log(\sqrt{1-u})\right)^{\frac{\beta}{2}-1}\frac{\left(-log(\sqrt{1-u})\right)}{u}du\\
&\leq&
C\int_{0}^{1/2}\left(-log(\sqrt{1-u})\right)^{\frac{\beta}{2}-1}du\,<
\infty.
\end{eqnarray*}
\end{enumerate}
\end{proof}
We are now ready to prove the main result, Theorem \ref{boundLpvarRieszPot}.

\begin{proof}

As usual, we split the operator $I_\beta$ in its local part and its global part
\begin{equation*}
I_{\beta}f(x)=I_{\beta,L}f(x)+I_{\beta,G}f(x),
\end{equation*}
where
\begin{equation*}
I_{\beta,L}f(x)=I_{\beta}(f\chi_{B_{h}(\cdot)} )(x)
\end{equation*}
is the local part,
\begin{equation*}
I_{\beta,G}f(x)=I_{\beta}(f\chi_{B^{c}_{h}(\cdot)} )(x)
\end{equation*}
is the global part, and for $x\in \Bbb R^{d}$ by taking $m(x)=1\wedge \frac{1}{|x|}$,\\

$B_{h}(x):=\lbrace y\in \Bbb R^{d}:|x-y|<dm(x)\rbrace$ is an $hiperbolic$ $ball$ ($admissible$ $ball$).\\

Let us take
$\omega(s)=\displaystyle\frac{e^{-\frac{|y-e^{-s}x|^{2}}{1-e^{-2s}}}}{(1-e^{-2s})^{\frac{d}{2}}}$,
then, from (\ref{kernelRiesz})
\begin{eqnarray*}
N_{\beta/2}(x,y)&=&\frac{1}{\pi^{\frac{d}{2}}\Gamma(\frac{\beta}{2})}\int_{0}^{+\infty}
t^{\frac{\beta}{2}-1}\left(\omega(t)-\omega(+\infty)\right)dt\\
&=&\frac{1}{\pi^{\frac{d}{2}}\Gamma(\frac{\beta}{2})}\int_{0}^{+\infty}
t^{\frac{\beta}{2}-1}\left(-\int_{t}^{+\infty}\frac{\partial
\omega(s)}{\partial s}ds\right)dt.
\end{eqnarray*}
Thus, using Hardy's inequality, see \cite{st1}
$$
\left|N_{\beta/2}(x,y)\right|\leq \frac{1}{\pi^{\frac{d}{2}}\Gamma(\frac{\beta}{2})}\int_{0}^{+\infty}
t^{\frac{\beta}{2}-1}\int_{t}^{+\infty}\left|\frac{\partial
\omega(s)}{\partial s}\right|ds\;dt \leq
\frac{1}{\pi^{\frac{d}{2}}\Gamma(\frac{\beta}{2})}\frac{2}{\beta}\int_{0}^{+\infty}
s^{\frac{\beta}{2}}\left|\frac{\partial
\omega(s)}{\partial s}\right|ds.
$$
Now,
\begin{eqnarray*}
\frac{\partial \omega(s)}{\partial
s}&=&\frac{(1-e^{-2s})^{\frac{d}{2}}e^{-\frac{|y-e^{-s}x|^{2}}{1-e^{-2s}}}}{(1-e^{-2s})^{d}}
\left(\frac{-2(1-e^{-2s})(y-e^{-s}x)\cdot(e^{-s}x)+|y-e^{-s}x|^{2}e^{-2s}}{(1-e^{-2s})^{2}}\right)\\
&&\hspace{4.5cm}-\frac{1}{(1-e^{-2s})^{d}}\left(\frac{d}{2}e^{\frac{|y-e^{-s}x|^{2}}{1-e^{-2s}}}(1-e^{-2s})^{\frac{d}{2}-1}2e^{-2s}\right)\\
&=&\omega(s)\left(\frac{-2(1-e^{-2s})(y-e^{-s}x)\cdot(e^{-s}x)+|y-e^{-s}x|^{2}e^{-2s}}{(1-e^{-2s})^{2}}-\frac{de^{-2s}}{(1-e^{-2s})}\right).
\end{eqnarray*}
Then, taking $u=1-e^{-2s},\; du=2e^{-2s}ds,$ i.e., $e^{-s} = \sqrt{1-u},$ we have
\begin{eqnarray*}
\left|N_{\beta/2}(x,y)\right|&\leq
&\int_{0}^{1}\left(-log(\sqrt{1-u})\right)^{\frac{\beta}{2}}\frac{e^{-\frac{|y-\sqrt{1-u}x|^{2}}{u}}}{u^{\frac{d}{2}}}
\\
&&\hspace{0.2cm}\times\left(\frac{2u|y-\sqrt{1-u}x|\sqrt{1-u}|x|+|y-\sqrt{1-u}x|^{2}(1-u)}{u^{2}}+\frac{d(1-u)}{u}\right)\frac{du}{2(1-u)}\\
&=&\int_{0}^{1}\left(-log(\sqrt{1-u})\right)^{\frac{\beta}{2}}\frac{e^{-\frac{|y-\sqrt{1-u}x|^{2}}{u}}}{u^{\frac{d}{2}}}
\left(\frac{|y-\sqrt{1-u}x||x|}{u\sqrt{1-u}}+\frac{|y-\sqrt{1-u}x|^{2}}{2u^{2}}+\frac{d}{2u}\right)du\\
&=& \int_{0}^{1}\frac{\left(-log(\sqrt{1-u})\right)^{\frac{\beta}{2}}}{u^{\frac{d}{2}+1}}e^{-\frac{|y-\sqrt{1-u}x|^{2}}{u}}
\frac{|y-\sqrt{1-u}x||x|}{\sqrt{1-u}}du \\
&& \hspace{1cm}+\int_{0}^{1}\left(-log(\sqrt{1-u})\right)^{\frac{\beta}{2}}\frac{e^{-\frac{|y-\sqrt{1-u}x|^{2}}{u}}}{u^{\frac{d}{2}+1}}\frac{|y-\sqrt{1-u}x|^{2}}{2u}du\\
&&  \hspace{1.5cm}+\int_{0}^{1}\left(-log(\sqrt{1-u})\right)^{\frac{\beta}{2}}\frac{e^{-\frac{|y-\sqrt{1-u}x|^{2}}{u}}}{u^{\frac{d}{2}}}\frac{d}{2u} du\\
&&=I+II+III,
\end{eqnarray*}
where
\begin{equation}
I = \int_{0}^{1}\frac{\left(-log(\sqrt{1-u})\right)^{\frac{\beta}{2}}}{u^{\frac{d}{2}+1}}e^{-\frac{|y-\sqrt{1-u}x|^{2}}{u}}
\frac{|y-\sqrt{1-u}x||x|}{\sqrt{1-u}}du,
\end{equation}
\begin{equation}
II= \int_{0}^{1}\left(-log(\sqrt{1-u})\right)^{\frac{\beta}{2}}\frac{e^{-\frac{|y-\sqrt{1-u}x|^{2}}{u}}}{u^{\frac{d}{2}+1}}\frac{|y-\sqrt{1-u}x|^{2}}{2u}du,
\end{equation}
and
\begin{equation}
III = \int_{0}^{1}\left(-log(\sqrt{1-u})\right)^{\frac{\beta}{2}}\frac{e^{-\frac{|y-\sqrt{1-u}x|^{2}}{u}}}{u^{\frac{d}{2}}}\frac{d}{2u} du.
\end{equation}

\begin{itemize}
\item Let us study the local part first. We need to bound each of the terms $I$, $II$ and $III$ in this part.

For $I$, since we are in the local part 
$|x-y| \leq \frac{d}{|x|}$, then  we have $|x-y||x| \leq d $  therefore,
\begin{eqnarray}\label{ineqlocpart}
\nonumber |y-\sqrt{1-u}\,x|^{2} & \geq& (|y-x|-|x|(1-\sqrt{1-u}))^{2} \\
& \geq& |y-x|^{2}-2|x||y-x| \frac{u}{1+\sqrt{1-u}} \geq|y-x|^{2}-2 d \; u.
\end{eqnarray}
On the other hand, it is well known that, there exist $C>0$ such that\\
for any $x>0$, $\alpha \geq 0$ and $c>0$,
\begin{equation}\label{expineq}
x^\alpha e^{-cx^2} \leq C \hspace{0.5 cm}\cdot
\end{equation}
 Thus, using (\ref{ineqlocpart}) and   (\ref{expineq}) twice, with $\alpha =1,\; c= \frac12$ and $\alpha=d-1, c=\frac12$, we get
\begin{eqnarray*}
I&=
&\int_{0}^{1}\frac{\left(-log(\sqrt{1-u})\right)^{\frac{\beta}{2}}}{u^{\frac{d}{2}}}e^{-\frac{|y-\sqrt{1-u}x|^{2}}{2u}}
\left(\left|\frac{y-\sqrt{1-u}x}{\sqrt{u}} 
\right|e^{-\frac{|y-\sqrt{1-u}x|^{2}}{2u}}\right)\frac{|x|}{\sqrt{u}\sqrt{1-u}}du\\
&\leq
&C|x|\int_{0}^{1}\left(-log(\sqrt{1-u})\right)^{\frac{\beta}{2}}\frac{e^{-\frac{|y-x|^{2}}{2u}}}{u^{\frac{d}{2}}}
\frac{du}{\sqrt{u}\sqrt{1-u}}
\end{eqnarray*}
\begin{eqnarray*}
&=&
C|x|\int_{0}^{1}\left(-log(\sqrt{1-u})\right)^{\frac{\beta}{2}}\frac{e^{-\frac{|y-x|^{2}}{2u}}}{u^{\frac{d-1}{2}}}
\frac{du}{u\sqrt{1-u}}\\
&\leq
&\frac{C|x|}{|x-y|^{d-1}}\int_{0}^{1}\left(-log(\sqrt{1-u})\right)^{\frac{\beta}{2}}
\frac{du}{u\sqrt{1-u}}.\\
\end{eqnarray*}
By Lemma \ref{leminteg} we get
\begin{eqnarray*}
&&\int_{0}^{1}\left(-log(\sqrt{1-u})\right)^{\frac{\beta}{2}}
\frac{du}{u\sqrt{1-u}}\\
&& \hspace{1cm} = \int_{0}^{1/2}\left(-log(\sqrt{1-u})\right)^{\frac{\beta}{2}}
\frac{du}{u\sqrt{1-u}}\; +
\;\int_{1/2}^{1}\left(-log(\sqrt{1-u})\right)^{\frac{\beta}{2}}
\frac{du}{u\sqrt{1-u}}\\
&& \hspace{1cm} \leq C\int_{0}^{1/2}\frac{\left(-log(\sqrt{1-u})\right)^{\frac{\beta}{2}}}{u}
du\; +
\;C\int_{1/2}^{1}\left(-log(\sqrt{1-u})\right)^{\frac{\beta}{2}}
\frac{du}{\sqrt{1-u}}<\infty,
\end{eqnarray*}
since  $\displaystyle\frac{1}{u}$ in bounded on $[1/2,1]$ and 
$\displaystyle\frac{1}{\sqrt{1-u}}$ is bounded on  $[0,1/2]$. Thus, we have
\begin{equation*}
I\leq \frac{C|x|}{|x-y|^{d-1}}.
\end{equation*}
For  $II$,  we use again (\ref{ineqlocpart}) and (\ref{expineq}) with $\alpha =2$ and $c=\frac12$ we have
\begin{eqnarray*}
II &=&
\int_{0}^{1}\left(-log(\sqrt{1-u})\right)^{\frac{\beta}{2}}\frac{e^{-\frac{|y-\sqrt{1-u}x|^{2}}{u}}}{u^{\frac{d}{2}+1}}\frac{|y-\sqrt{1-u}x|^{2}}{2u}du\\
&\leq &
C\int_{0}^{1}\left(-log(\sqrt{1-u})\right)^{\frac{\beta}{2}}\frac{e^{-\frac{|y-\sqrt{1-u}x|^{2}}{2u}}}{u^{\frac{d}{2}+1}}du\\
&\leq
& C\int_{0}^{1}\left(-log(\sqrt{1-u})\right)^{\frac{\beta}{2}}\frac{e^{-\frac{|y-x|^{2}}{2u}}}{u^{\frac{d}{2}+1}}du\\
&=&C \int_{0}^{1/2}\frac{\left(-log(\sqrt{1-u})\right)}{u}\left(-log(\sqrt{1-u})\right)^{\frac{\beta}{2}-1}\frac{e^{-\frac{|y-x|^{2}}{2u}}}{u^{\frac{d}{2}}}du\;+
\;\int_{1/2}^{1}\left(-log(\sqrt{1-u})\right)^{\frac{\beta}{2}}\frac{e^{-\frac{|y-x|^{2}}{2u}}}{u^{\frac{d}{2}+1}}du.\\
\end{eqnarray*}
Since $\displaystyle\lim_{u\to
0}\frac{\left(-log(\sqrt{1-u})\right)}{u}=\frac{1}{2}$, this function is bounded on $[0,1/2]$ and 
$\displaystyle\frac{1}{u}$ is bounded on $[1/2,1]$, thus we get
\begin{eqnarray*}
II&\leq
&C\int_{0}^{1/2}\left(-log(\sqrt{1-u})\right)^{\frac{\beta}{2}-1}\frac{e^{-\frac{|y-x|^{2}}{2u}}}{u^{\frac{d}{2}}}du\;+
\;C\int_{1/2}^{1}\left(-log(\sqrt{1-u})\right)^{\frac{\beta}{2}}\frac{e^{-\frac{|y-x|^{2}}{2u}}}{u^{\frac{d}{2}}}du\\
&\leq
&C\int_{0}^{1}\left(-log(\sqrt{1-u})\right)^{\frac{\beta}{2}-1}\frac{e^{-\frac{|y-x|^{2}}{2u}}}{u^{\frac{d}{2}}}du\;+
\;C\int_{0}^{1}\left(-log(\sqrt{1-u})\right)^{\frac{\beta}{2}}\frac{e^{-\frac{|y-x|^{2}}{2u}}}{u^{\frac{d}{2}}}du\\
&=&C\mathcal{K}_{2}(x-y)\;+\;CG_{2}(x-y),
\end{eqnarray*}
where
$$ \mathcal{K}_2(x) := \int_{0}^{1}\left(-log(\sqrt{1-u})\right)^{\frac{\beta}{2}-1}\frac{e^{-\frac{|x|^{2}}{2u}}}{u^{\frac{d}{2}}}du,$$
and 
$$
G_{2}(x):=\int_{0}^{1}\left(-log(\sqrt{1-u})\right)^{\frac{\beta}{2}}\frac{e^{-\frac{|x|^{2}}{2u}}}{u^{\frac{d-1}{2}}}\frac{du}{\sqrt{u}}.
$$
Again by (\ref{expineq})
\begin{eqnarray*}
G_{2}(x-y)&=&\int_{0}^{1}\left(-log(\sqrt{1-u})\right)^{\frac{\beta}{2}}\frac{e^{-\frac{|y-x|^{2}}{2u}}}{u^{\frac{d-1}{2}}}\frac{du}{\sqrt{u}}\\
&\leq& \frac{C}{|x-y|^{d-1}}\int_{0}^{1}\left(-log(\sqrt{1-u})\right)^{\frac{\beta}{2}}\frac{du}{\sqrt{u}}.
\end{eqnarray*}
Now, by H\"older's inequality with $p=\frac{3}{2}$ y $q=3$ we have
\begin{equation*}
\int_{0}^{1}\left(-log(\sqrt{1-u})\right)^{\frac{\beta}{2}}\frac{du}{\sqrt{u}}\leq
\left(\int_{0}^{1}\left(-log(\sqrt{1-u})\right)^{\frac{3\beta}{2}}du\right)^{1/3}
\left(\int_{0}^{1}\frac{du}{u^{3/4}}\right)^{2/3}.
\end{equation*}
Thus, by Lemma (\ref{lemgamma}) we obtain
\begin{equation*}
\int_{0}^{1}\left(-log(\sqrt{1-u})\right)^{\frac{3\beta}{2}}du=\int_{0}^{1}\left(-log(\sqrt{1-u})\right)^{(\frac{3\beta}{2}+1)-1}du\;<\;\infty
\end{equation*}
and trivially
$\displaystyle{\int_{0}^{1}\frac{du}{u^{3/4}}}< \infty.$\\

Finally for $III$, by analogous arguments as in $II$, we get
$$
III = \int_{0}^{1}\left(-log(\sqrt{1-u})\right)^{\frac{\beta}{2}}\frac{e^{-\frac{|y-\sqrt{1-u}x|^{2}}{u}}}{u^{\frac{d}{2}}}\frac{d}{2u} du\leq C \mathcal{K}_{2}(x-y)\;+\;CG_{2}(x-y).
$$

Therefore, 
\begin{eqnarray*}
\left|N_{\beta/2}(x,y)\right|&\leq & I+II+III\\
&\leq & \frac{C|x|}{|x-y|^{d-1}}\,+\,C\mathcal{K}_{2}(x-y)\,+\,\frac{C}{|x-y|^{d-1}}\\
&\leq & C\frac{|x|+1}{|x-y|^{d-1}}\,+\,C\mathcal{K}_{2}(x-y)= C\mathcal{K}_{3}(x,y)\,+\,C\mathcal{K}_{2}(x-y),
\end{eqnarray*}
where 
$$ \mathcal{K}_{3}(x,y) :=  \frac{|x|+1}{|x-y|^{d-1}}.$$
Thus, using the above estimates, we conclude that  the local part
$I_{\beta,L}$ can be bounded as

\begin{eqnarray*}
|I_{\beta,L} f(x)| &=&  |I_{\beta} (f\chi_{B_h(\cdot)})(x)|  =  \Big| \int_{B_h(x)}  N_{\beta/2} (x,y)  f(y) \;dy \Big|\\
&\lesssim& \int_{B_h(x)}  {\mathcal K}_3(x,y)  |f(y)| \;dy   + \int_{B_h(x)}  {\mathcal K}_2(x-y) | f(y) |\;dy \\
&=& IV + V.\\
\end{eqnarray*}

Now, to bound IV and V, we need to take a countable family of admissible balls $ \mathcal{F}$ that satisfies the condition of Lemma 4.3 of \cite{urbina2019}. In particular, \(\mathcal{F}\) verifies
\begin{enumerate}
\item[i)] For each $B \in\mathcal{F}$ let  \(\tilde{B}=2 B,\) then, the family of those balls $\tilde{\mathcal{F}}= \{B(0,1),\{\tilde{B}\}_{B \in \mathcal{F}}\}$ is a covering of \(\mathbb{R}^{d}\);
\item[ii)] \(\mathcal{F}\) has a bounded overlaps property;
\item[iii)] Every ball \(B \in \mathcal{F}\) is contained in an admissible ball, and therefore for
any pair \(x, y \in B, e^{-|x|^{2}} \sim e^{-|y|^{2}}\) with constants independent of \(B\)
\item[iv)]  There exists a uniform positive constant \(C_{d}\) such that, if \(x \in B \in \mathcal{F}\) then \(B_h(x) \subset C_{d} B:=\hat{B} .\) Moreover, the collection \(\hat{\mathcal{F}}=\{\hat{B}\}_{B \in \mathcal{F}}\) also satisfies the properties ii) and iii).\\
\end{enumerate}

Given $B \in  \mathcal{F},$ if  $x \in B$ then $B_h(x) \subset
\hat{B}$, we get,
\begin{eqnarray*}
IV &=&  (1+|x|) \sum_{k=0}^\infty \int_{2^{-(k+1)}C_d m(x) < |x-y| < 2^{-k}C_d m(x)} \frac{|f(y)| \chi_ {\hat{B}}}{|x-y|^{d-1}} dy\\
&\leq& C_d 2^d \mathcal{M}(f\chi_{\hat{B}})(x) (1+|x|) m(x)
\sum_{k=0}^\infty 2^{-(k+1)} \leq C
\mathcal{M}(f\chi_{\hat{B}})(x)\chi_{B}(x),
\end{eqnarray*}
where $\mathcal{M}(g)$ is the classical Hardy-Littlewood maximal
function of the function $g.$\\

 On the other hand, let us consider the function 
$\varphi(y) =\frac{1}{\pi^{d/2}} e^{-\frac{1}{2}|y|^2},$ 
then $\displaystyle\int_{\mathbb{R}^d} \varphi(y) dy =1$. It is well known that
$\varphi$ is a non-increasing radial function, and given $t>0,\,$
we rescale this function as $\varphi_{\sqrt{t}} (y) = t^{-d/2}
\varphi(y/\sqrt{t}),$ and, since $0\leq \varphi \in
L^1(\mathbb{R}^d),$ $\left\{\varphi_{\sqrt{t}}\right \}_{t>0}\;$ is the 
classical (Gauss-Weiertrass) approximation of the identity in $\mathbb{R}^d.\,$ Then,
since\\
$$\displaystyle\int_{0}^{1}\left(-log(\sqrt{1-t})\right)^{\frac{\beta}{2}-1}
dt <\infty,$$
we get
\begin{eqnarray*}
V &=& \int_{B_h(x)}  \mathcal{K}_2(x-y) | f(y) |\;dy  =
\int_{B_h(x)}  \Big(\int_0^1 \varphi_{\sqrt{t}}(x-y)
\left(-log(\sqrt{1-t})\right)^{\frac{\beta}{2}-1}
dt\Big) |f(y)| dy\\
&\leq&  \int_{B_h(x)} \Big(\sup_{t>0}  \varphi_{\sqrt{t}}(x-y)
\Big)
\Big(\int_0^1\left(-log(\sqrt{1-t})\right)^{\frac{\beta}{2}-1}
dt\Big) |f(y)| dy\\
&\leq& C \int_{B_h(x)} \Big(\sup_{t>0}  \varphi_{\sqrt{t}}(x-y)
\Big) |f(y)| dy.
\end{eqnarray*}

Again, using the family  $\mathcal{F}$, if  $x \in B$ then $B_h(x)
\subset \hat{B}$. By a similar argument as before and as in as result in Stein's book \cite[Chapter II \S4, Theorem 4]{st1},
\begin{eqnarray*}
V &=& \int_{B_h(x)}  \mathcal{K}_2(x-y) | f(y) |\;dy  \leq C
\int_{\mathbb{R}^d} \left(\sup_{t>0}  \varphi_{\sqrt{t}}(x-y) \right)
|f(y)| \chi_{\hat{B}}(y) dy\\
 &\lesssim&  \sum_{B \in \mathcal{F}}   \left| \left(  \sup_{t>0}\varphi_{\sqrt{t}} *  |f \chi_{\hat{B}}|\right)(x) \right| \chi_{B}(x)\leq  \sum_{B \in \mathcal{F}} \mathcal{M}(f \chi_{\hat{B}})(x)
\chi_{B}(x),
\end{eqnarray*}
which yields,
$|I_{\beta,L}f(x)|\leq\displaystyle\sum_{B \in \mathcal{F}} \mathcal{M}(f \chi_{\hat{B}})(x)
\chi_{B}(x).$\\
Then, for \(f \in L^{p(\cdot)}\left(\mathbb{R}^{d}, \gamma_d\right)\) we will use the characterization of the norm by duality, we get
\begin{equation}\label{ineq2}
\left\|I_{\beta,L}(f)\right\|_{p(\cdot), \gamma_{d}} \leq 2 \sup _{\|g\|_{p^{\prime}(\cdot), \gamma_{d}} \leq 1} \int_{\mathbb{R}^{d}}\left|I_{\beta,L}f(x)\right||g(x)| \gamma_d(dx).
\end{equation}

Using the estimates above, we get
\begin{eqnarray*}
\int_{\mathbb{R}^{d}}\left|I_{\beta,L}f(x)\right||g(x)| \gamma_d(dx) &\lesssim& 
\sum_{B \in \mathcal{F}} \int_{B} \mathcal{M}\left(f \chi_{\hat{B}(\cdot)}\right)(x)|g(x)| e^{-|x|^{2}} d x \\
&&  \approx\sum_{B \in \mathcal{F}} e^{-\left|c_{B}\right|^{2}} \int_{B} \mathcal{M}\left(f \chi_{\hat{B}(\cdot)}\right)(x)|g(x)| d x,
\end{eqnarray*}
where \(c_{B}\) is the center of \(B\) and \(\hat{B}\) and we have used property iii) above, i.e. that over each ball of the family \(\mathcal{F},\) the values of \(\gamma_d\) are all equivalent.\\ Applying H\"older's inequality for \(p(\cdot)\) and \(p^{\prime}(\cdot)\) with respect of the Lebesgue measure and the boundedness of \(\mathcal{M}\) on $L^{p(\cdot)}\left(\mathbb{R}^{d}\right),$ we get
\begin{eqnarray}\label{ineq3}
\nonumber \int_{\mathbb{R}^{d}}\left|I_{\beta,L}f(x)\right||g(x)| \gamma_{d}(d x) &\lesssim & \sum_{B \in \mathcal{F}} e^{-\left|c_{B}\right|^{2}}\left\|\mathcal{M}\left(f \chi_{\hat{B}(\cdot)}\right) \chi_{B}\right\|_{p(\cdot)}\left\|g \chi_{B}\right\|_{p^{\prime}(\cdot)} \\
\nonumber &\lesssim &  \sum_{B \in \mathcal{F}} \mathrm{e}^{-\left|c_{B}\right|^{2}}\left\|f \chi_{\hat{B}}\right\|_{p(\cdot)}\left\|g \chi_{B}\right\|_{p^{\prime}(\cdot)} \\
&=& \sum_{B \in \mathcal{F}} \mathrm{e}^{-\left|c_{B}\right|^{2} / p_{\infty}}\left\|f \chi_{\hat{B}}\right\|_{p(\cdot)} e^{-\left|c_{B}\right|^{2} / p_{\infty}^{\prime}}\left\|g \chi_{\hat{B}}\right\|_{p^{\prime}(\cdot)}.
\end{eqnarray}
Since \(p \in P_{\gamma_{d}}^{\infty}\left(\mathbb{R}^{d}\right)\) and \(p_{-}>1$ then $p'\in P_{\gamma_{d}}^{\infty}\left(\mathbb{R}^{d}\right) .\) Thus, from Lemma \(1.4,\) for
every \(x \in \mathbb{R}^{d}\)
\begin{equation}\label{equiv}
e^{-|x|^{2}\left(p(x) / p_{\infty}-1\right)} \leq C_{1} \text { and } e^{-|x|^{2}\left(p^{\prime}(x) / p_{\infty}^{\prime}-1\right)} \leq C_{2}.
\end{equation}

Moreover, since the values of the Gaussian measure \(\gamma_{d}\) are all equivalent on any ball $\hat{B} \in \tilde{\mathcal{F}}$, we have
\begin{eqnarray*}
\int_{\hat{B}}\left(\frac{|f(y)|}{e^{\left|c_{B}\right|^{2} / p_{\infty}}\left\|f \chi_{\hat{B}}\right\|_{p(\cdot), \gamma_{d}}}\right)^{p(y)} d y &\lesssim & \int_{\hat{B}}\left(\frac{|f(y)|}{\left\|f \chi_{\hat{B}}\right\|_{p(\cdot), \gamma_{d}}}\right)^{p(y)} e^{-|y|^{2}\left(p(y) / p_{\infty}-1\right)} \gamma_{d}(dy) \\
&\lesssim &\int_{\hat{B}}\left(\frac{|f(y)|}{\left\|f \chi_{\hat{B}}\right\|_{p(\cdot), \gamma_{d}}}\right)^{p(y)} \gamma_{d}(dy) \lesssim  1,
\end{eqnarray*}
which yields
$$
e^{-\left|c_{B}\right|^{2} / p_{\infty}}\left\|f \chi_{\hat{B}}\right\|_{p(\cdot)} \lesssim \left\|f \chi_{\hat{B}}\right\|_{p(\cdot), \gamma_{d}}.
$$
Similarly, by the second inequality of (\ref{equiv}) we also get
$$
e^{-\left|c_{B}\right|^{2} / p_{\infty}^{\prime}}\left\|g \chi_{\hat{B}}\right\|_{p^{\prime}(\cdot)} \lesssim \left\|g \chi_{\hat{B}}\right\|_{p^{\prime}(\cdot), \gamma_{d}}.
$$

Replacing both estimates in (\ref{ineq3}) we obtain
\begin{eqnarray*}
\int_{\mathbb{R}^{d}}\left|I_{\beta,L}f(x)\right||g(x)| \gamma_{d}(d x) &  \lesssim  &\sum_{B \in \mathcal{F}}\left\|f \chi_{\hat{B}}\right\|_{p(\cdot), \gamma_{d}}\left\|g \chi_{\hat{B}}\right\|_{p^{\prime}(\cdot), \gamma_{d}} \\
&=& \sum_{B \in \mathcal{F}}\left\|f \chi_{\hat{B}} e^{-|\cdot|^{2} / p(\cdot) |}\right\|_{p(\cdot)}\left\|g \chi_{\hat{B}} e^{-|\cdot|^{2} / p^{\prime}(\cdot)}\right\|_{p^{\prime}(\cdot)}.
\end{eqnarray*}
Since the family of balls \(\hat{\mathcal{F}}\) has bounded overlaps, from Corollary 1.1 applied to \(f e^{-|\cdot|^{2} / p(\cdot)} \in L^{p(\cdot)}(\mathbb{R}^{d})\) and \(g e^{-|\cdot|^{2} / p^{\prime}(\cdot)} \in L^{p^{\prime}(\cdot)}(\mathbb{R}^{d}),\) it follows that
$$
\int_{\mathbb{R}^{d}}\left|I_{\beta,L}f(x)\right||g(x)| \gamma_{d}(d x) \leqslant\|f\|_{p(\cdot), \gamma_{d}}\|g\|_{p^{\prime}(\cdot), \gamma_{d}}.
$$
Taking supremum over all functions \(g\) with \(\|g\|_{p^{\prime}(\cdot), \gamma_{d}} \leq 1,\) from (\ref{ineq2}) we get finally
$$
\left\|I_{\beta,L}(f)\right\|_{p(\cdot), \gamma_{d}}\leq C\|f\|_{p(\cdot), \gamma_{d}}.\\
$$
Therefore,  the local part $I_{\beta,L}$ is bounded in $L^{p(\cdot)}(\gamma_d)$.\\

\item Now, let us study the global part. Again, since 
$$
\left|N_{\beta/2}(x,y)\right|\leq
I+II+III,
$$
we need to estimate each term in this part. As usual for the global part, the arguments are completely different but are based on the following technical result, obtained S. P\'erez \cite[Lemma 3.1]{pe}, see also \cite[\S4.5]{urbina2019}.To simplify the notation, in what follows we denote
$$
a=a(x, y):=|x|^{2}+|y|^{2}, b=b(x, y):=2\langle x, y\rangle \text {, }
$$
$$
u(t)=u(t ; x, y):=\frac{|y-\sqrt{1-t x}|^{2}}{t}=\frac{a}{t}-\frac{\sqrt{1-t}}{t} b-|x|^{2},
$$
$$t_{0}=2 \frac{\sqrt{a^{2}-b^{2}}}{a+\sqrt{a^{2}-b^{2}}},$$
 then
$$
u\left(t_{0}\right)=\frac{\sqrt{a^{2}-b^{2}}}{2}+\frac{a}{2}-|x|^{2}=\frac{|y|^{2}-|x|^{2}}{2}+\frac{\sqrt{a^{2}-b^{2}}}{2}
$$
and
$$
t_{0} \sim \frac{\sqrt{a^{2}-b^{2}}}{a} \sim \frac{\sqrt{a-b}}{\sqrt{a+b}}=\frac{|x-y|}{|x+y|}.
$$
It is well known that $t_{0}<1$, the minimum of $u(t)$ is attained at 
$t_{0}$ and 
$$\frac{1}{t_{0}^{d/2}}\lesssim |x+y|^{d}.$$
For details and other properties of these terms, see \cite{pe}, \cite{TesSon} or \cite{urbina2019}.\\

Let us fix $x\in \Bbb R^{d}$ and consider $E_{x}=\lbrace y\in \Bbb R^{d}:b>0\rbrace$.\\
\begin{itemize}
\item \underline{Case $b\leq 0$}:\\

First, for $0< \epsilon <1$,  using inequality (\ref{expineq}) we have
\begin{eqnarray*}
I&=&
\int_{0}^{1}\left(-log(\sqrt{1-t})\right)^{\frac{\beta}{2}}\frac{e^{-u(t)}}{t^{\frac{d}{2}}}\frac{\left|y-\sqrt{1-t}x\right||x|}{t\sqrt{1-t}}dt\\
&=&
|x|\int_{0}^{1}\left(-log(\sqrt{1-t})\right)^{\frac{\beta}{2}}\frac{e^{-\epsilon
u(t)-(1-\epsilon)u(t)}}{t^{\frac{d+1}{2}}}\frac{\left|y-\sqrt{1-t}x\right|}{\sqrt{t}\sqrt{1-t}}dt\\
&\leq &
C_{\epsilon}|x|\int_{0}^{1}\left(-log(\sqrt{1-t})\right)^{\frac{\beta}{2}}\frac{e^{-(1-\epsilon)u(t)}}{t^{\frac{d+1}{2}}}\frac{dt}{\sqrt{1-t}}.
\end{eqnarray*}
Since 
$\displaystyle\frac{\left(-log(\sqrt{1-t})\right)^{\frac{\beta}{2}}}{\sqrt{1-t}}$
is continuous on $[0,1/2]$ and therefore bounded, we get
\begin{equation*}
\int_{0}^{1/2}\left(-log(\sqrt{1-t})\right)^{\frac{\beta}{2}}\frac{e^{-(1-\epsilon)u(t)}}{t^{\frac{d+1}{2}}}\frac{dt}{\sqrt{1-t}}
\leq C_{\beta}
\int_{0}^{1/2}\frac{e^{-(1-\epsilon)u(t)}}{t^{\frac{d+1}{2}}}dt,
\end{equation*}
and since $0<t<1/2$, $\frac{1}{\sqrt{t}}<\frac{1}{t}$ and then, by (\ref{desint}), we get  
\begin{eqnarray*}
\int_{0}^{1/2}\frac{e^{-(1-\epsilon)u(t)}}{t^{\frac{d+1}{2}}}dt&\leq
&e^{(1-\epsilon)|x|^{2}}\int_{0}^{1/2}\frac{e^{-(1-\epsilon)\frac{a}{t}}}{t^{\frac{d+1}{2}}}dt\\
&\leq
&e^{(1-\epsilon)|x|^{2}}\int_{0}^{1/2}\frac{e^{-(1-\epsilon)\frac{a}{t}}}{t^{\frac{d}{2}+1}}dt\\
&\leq
&e^{(1-\epsilon)|x|^{2}}C_{\epsilon}e^{-(1-\epsilon)a}= C_{\epsilon}e^{-(1-\epsilon)|y|^{2}}.
\end{eqnarray*}
Analogously,
\begin{eqnarray*}
\int_{1/2}^{1}\left(-log(\sqrt{1-t})\right)^{\frac{\beta}{2}}\frac{e^{-(1-\epsilon)u(t)}}{t^{\frac{d+1}{2}}}\frac{dt}{\sqrt{1-t}}
&\leq
&e^{(1-\epsilon)|x|^{2}}\int_{1/2}^{1}\left(-log(\sqrt{1-t})\right)^{\frac{\beta}{2}}\frac{e^{-(1-\epsilon)\frac{a}{t}}}{t^{\frac{d+1}{2}}}\frac{dt}{\sqrt{1-t}}\\
&\leq &C_{d}e^{(1-\epsilon)|x|^{2}}\int_{1/2}^{1}\left(-log(\sqrt{1-t})\right)^{\frac{\beta}{2}}\frac{e^{-(1-\epsilon)\frac{a}{t}}}{\sqrt{1-t}}dt,
\end{eqnarray*}
since $\displaystyle\frac{1}{t^{\frac{d+1}{2}}}$ is bounded on
$[1/2,1]$, also as $1/2<t<1$ we have $-\frac{a}{t}<-a$ and then by Lemma \ref{leminteg}
\begin{eqnarray*}
\int_{1/2}^{1}\left(-log(\sqrt{1-t})\right)^{\frac{\beta}{2}}\frac{e^{-(1-\epsilon)\frac{a}{t}}}{\sqrt{1-t}}dt&\leq
&e^{-(1-\epsilon)a}\int_{1/2}^{1}\left(-log(\sqrt{1-t})\right)^{\frac{\beta}{2}}\frac{dt}{\sqrt{1-t}}\\
&=&C_{\beta}e^{-(1-\epsilon)a}.
\end{eqnarray*}
Thus,
\begin{equation*}
\int_{1/2}^{1}\left(-log(\sqrt{1-t})\right)^{\frac{\beta}{2}}\frac{e^{-(1-\epsilon)u(t)}}{t^{\frac{d+1}{2}}}\frac{dt}{\sqrt{1-t}}
\leq C_{d,\beta}
e^{(1-\epsilon)|x|^{2}}e^{-(1-\epsilon)a}=Ce^{-(1-\epsilon)|y|^{2}}.
\end{equation*}
Therefore
$$I \leq C_\epsilon |x| e^{-(1-\epsilon)|y|^{2}}.$$
Now, using again inequality (\ref{expineq}), we get
\begin{eqnarray*}
II&=&
\int_{0}^{1}\left(-log(\sqrt{1-t})\right)^{\frac{\beta}{2}}\frac{e^{-u(t)}}{t^{\frac{d}{2}}}\frac{\left|y-\sqrt{1-t}x\right|^{2}}{2t^{2}}dt\\
&=&
\frac{1}{2}\int_{0}^{1}\left(-log(\sqrt{1-t})\right)^{\frac{\beta}{2}}\frac{e^{-\epsilon
u(t)-(1-\epsilon)u(t)}}{t^{\frac{d}{2}+1}}\frac{\left|y-\sqrt{1-t}x\right|^{2}}{t}dt\\
&\leq
&C_{\epsilon}\int_{0}^{1}\left(-log(\sqrt{1-t})\right)^{\frac{\beta}{2}}\frac{e^{-(1-\epsilon)u(t)}}{t^{\frac{d}{2}+1}}dt\\
&\leq
&C_{\epsilon}e^{(1-\epsilon)|x|^{2}}\int_{0}^{1}\left(-log(\sqrt{1-t})\right)^{\frac{\beta}{2}}\frac{e^{-(1-\epsilon)\frac{a}{t}}}{t^{\frac{d}{2}+1}}dt.\\
\end{eqnarray*}
Now,
\begin{equation*}
\int_{0}^{1/2}\left(-log(\sqrt{1-t})\right)^{\frac{\beta}{2}}\frac{e^{-(1-\epsilon)\frac{a}{t}}}{t^{\frac{d}{2}+1}}dt
\leq
C\int_{0}^{1/2}\frac{e^{-(1-\epsilon)\frac{a}{t}}}{t^{\frac{d}{2}+1}}dt
\leq
C\int_{0}^{1}\frac{e^{-(1-\epsilon)\frac{a}{t}}}{t^{\frac{d}{2}+1}}dt.
\end{equation*}
Therefore, by taking the change of variables $s=a(\frac{1}{t}-1)$, we get
\begin{eqnarray}\label{desint}
\nonumber\int_{0}^{1}\frac{e^{-(1-\epsilon)\frac{a}{t}}}{t^{\frac{d}{2}+1}}dt&=
&\int_{0}^{+\infty}e^{-(1-\epsilon)(s+a)}\left(\frac{s+a}{a}\right)^{\frac{d}{2}+1}\frac{a}{\left(s+a\right)^{2}}ds\\
\nonumber&=
&\frac{e^{-(1-\epsilon)a}}{a^{\frac{d}{2}}}\int_{0}^{+\infty}e^{-(1-\epsilon)s}\left(s+a\right)^{\frac{d}{2}-1}ds\\
\nonumber &\leq
&\frac{Ce^{-(1-\epsilon)a}}{a^{\frac{d}{2}}}\int_{0}^{+\infty}e^{-(1-\epsilon)s}\left(s^{\frac{d}{2}-1}+a^{\frac{d}{2}-1}\right)ds\\
\nonumber &\leq &\frac{C_{\epsilon}e^{-(1-\epsilon)a}}{a^{\frac{d}{2}}}\left(\Gamma\left(\frac{d}{2}\right)\;+\;a^{\frac{d}{2}-1}\right)\\
\nonumber &=&C_{\epsilon}e^{-(1-\epsilon)a}\left(\frac{\Gamma\left(\frac{d}{2}\right)}{a^{\frac{d}{2}}}\;+\;\frac{1}{a}\right)\\
&\leq &C_{\epsilon}e^{-(1-\epsilon)a},
\end{eqnarray}
since $a\geq\frac{d}{2}$. Analogously, by Lemma \ref{lemgamma}
\begin{eqnarray*}
\int_{1/2}^{1}\left(-log(\sqrt{1-t})\right)^{\frac{\beta}{2}}\frac{e^{-(1-\epsilon)}}{t^{\frac{d}{2}+1}}dt
&\leq &C\int_{ 1/2}^{1}\left(-log(\sqrt{1-t})\right)^{\frac{\beta}{2}}e^{-(1-\epsilon)\frac{a}{t}}dt\\
&\leq &C\int_{1/2}^{1}\left(-log(\sqrt{1-t})\right)^{\frac{\beta}{2}}e^{-(1-\epsilon)a}dt\\
&\leq&Ce^{-(1-\epsilon)a}\int_{0}^{1}\left(-log(\sqrt{1-t})\right)^{\frac{\beta}{2}}dt \\
&=& Ce^{-(1-\epsilon)a}.
\end{eqnarray*}
Then,
\begin{equation*}
II\leq
C_{\epsilon}e^{(1-\epsilon)|x|^{2}}e^{-(1-\epsilon)a}=C_{\epsilon}e^{(1-\epsilon)|x|^{2}}e^{-(1-\epsilon)\left(|y|^{2}+|x|^{2}\right)}=C_{\epsilon}e^{-(1-\epsilon)|y|^{2}}.
\end{equation*}
Finally, 
\begin{eqnarray*}
III&=&\int_{0}^{1}\left(-log(\sqrt{1-t})\right)^{\frac{\beta}{2}}\frac{e^{-u(t)}}{t^{\frac{d}{2}}}\frac{d}{2}\frac{dt}{t}\leq \frac{d}{2}\int_{0}^{1}\left(-log(\sqrt{1-t})\right)^{\frac{\beta}{2}}e^{-\frac{a}{t}+|x|^{2}}\frac{dt}{t^{\frac{d}{2}+1}}\\
&= &Ce^{|x|^{2}}\int_{0}^{1}\left(-log(\sqrt{1-t})\right)^{\frac{\beta}{2}}\frac{e^{-\frac{a}{t}}}{t^{\frac{d}{2}+1}}dt.\\
\end{eqnarray*}
Now,
\begin{equation*}
\int_{0}^{1/2}\left(-log(\sqrt{1-t})\right)^{\frac{\beta}{2}}\frac{e^{-\frac{a}{t}}}{t^{\frac{d}{2}+1}}dt\leq
C\int_{0}^{1/2}\frac{e^{-\frac{a}{t}}}{t^{\frac{d}{2}+1}}dt\leq
C\int_{0}^{1}\frac{e^{-\frac{a}{t}}}{t^{\frac{d}{2}+1}}dt
\end{equation*}
since $\left(-log(\sqrt{1-t})\right)^{\frac{\beta}{2}}$ is
bounded in $[0,1/2],$ by taking 
$$s=a(\frac{1}{t}-1), \;ds=-\frac{a}{t^{2}}dt,$$
we get
\begin{eqnarray*}
\int_{0}^{1}\frac{e^{-\frac{a}{t}}}{t^{\frac{d}{2}+1}}dt&=&
\int_{0}^{+\infty}e^{-(s+a)}\left(\frac{s+a}{a}\right)^{\frac{d}{2}+1}\left(\frac{a}{s+a}\right)^{2}\frac{ds}{a}\\
&=&\frac{e^{-|y|^{2}-|x|^{2}}}{a^{\frac{d}{2}}}\int_{0}^{+\infty}e^{-s}\left(s+a\right)^{\frac{d}{2}-1}ds.\\
\end{eqnarray*}
On the other hand,
\begin{equation*}
\left(s+a\right)^{\frac{d}{2}-1}=\frac{\left(s+a\right)^{\frac{d}{2}}}{s+a}\leq
C\frac{\left(s^{\frac{d}{2}}+a^{\frac{d}{2}}\right)}{s+a}\leq
C\left(\frac{s^{\frac{d}{2}}}{s}+\frac{a^{\frac{d}{2}}}{a}\right)
=C\left(s^{\frac{d}{2}-1}+a^{\frac{d}{2}-1}\right).
\end{equation*}
Therefore,
\begin{eqnarray*}
\int_{0}^{1}\frac{e^{-\frac{a}{t}}}{t^{\frac{d}{2}+1}}dt&\leq &
C\frac{e^{-|y|^{2}-|x|^{2}}}{a^{\frac{d}{2}}}\left(\int_{0}^{+\infty}e^{-s}s^{\frac{d}{2}-1}ds\;+
\;\int_{0}^{+\infty}e^{-s}a^{\frac{d}{2}-1}ds\right)\\
&=&C\frac{e^{-|y|^{2}-|x|^{2}}}{a^{\frac{d}{2}}}\left(\Gamma\left(\frac{d}{2}\right)\;+\;a^{\frac{d}{2}-1}\right)=Ce^{-|y|^{2}-|x|^{2}}\left(\frac{\Gamma\left(\frac{d}{2}\right)}{a^{\frac{d}{2}}}\;+\;\frac{1}{a}\right).
\end{eqnarray*}
Thus,
\begin{equation*}
\int_{0}^{1}\frac{e^{-\frac{a}{t}}}{t^{\frac{d}{2}+1}}dt\leq
Ce^{-|y|^{2}-|x|^{2}},\hspace{0.7cm}\mbox{as }a\geq\frac{d}{2}.
\end{equation*}
For $\frac{1}{2}<t<1,\hspace{0.3cm}-a>-\frac{a}{t}>-2a$. Hence, by Lemma \ref{lemgamma}
\begin{eqnarray*}
\int_{1/2}^{1}\left(-log(\sqrt{1-t})\right)^{\frac{\beta}{2}}\frac{e^{-\frac{a}{t}}}{t^{\frac{d}{2}+1}}dt&\leq
&C\int_{1/2}^{1}\left(-log(\sqrt{1-t})\right)^{\frac{\beta}{2}}e^{-a}dt\\
&\leq
&Ce^{-a}\int_{0}^{1}\left(-log(\sqrt{1-t})\right)^{\frac{\beta}{2}}dt\\
&=&Ce^{-|y|^{2}-|x|^{2}}.
\end{eqnarray*}
Then,
$$
III\leq Ce^{|x|^{2}}e^{-|y|^{2}-|x|^{2}}= Ce^{-|y|^{2}}\leq Ce^{-(1-\epsilon)|y|^{2}}.
$$
In other words,
\begin{equation*}
I\leq C_{\epsilon}|x|e^{-(1-\epsilon)|y|^{2}}
\end{equation*}
and
\begin{equation*}
II, \; III\leq Ce^{-|y|^{2}}\leq Ce^{-(1-\epsilon)|y|^{2}}.
\end{equation*}
Thus, for $b\leq 0$
\begin{equation*}
\left|N_{\beta/2}(x,y)\right|\leq
C_{\epsilon}(|x|+1)e^{-(1-\epsilon)|y|^{2}}.
\end{equation*}
Next, we take $0<\epsilon<1/p'_{-}$ and $\tilde{\epsilon}=1/p'_{-} - \epsilon = 1-\epsilon-1/p_{-}$. Then, $\tilde{\epsilon}>0$ and $1-\epsilon=\tilde{\epsilon}+1/p_{-}$\,.Therefore, for $f\in L^{p(\cdot)}(\mathbb{R}^{d},\gamma_{d})$ with $\|f\|_{p,\gamma_{d}}=1$, using H\"older's inequality
\begin{eqnarray*}
\int\limits_{\mathbb{R}^{d}}\left(\int\limits_{B_{h}^{c}(x)\cap
E^{c}_{x}}\left|N_{\beta/2}(x,y)\right||f(y)|dy\right)^{p(x)}\gamma_{d}(dx) &\lesssim
&\int\limits_{\mathbb{R}^{d}}\left(\int\limits_{\mathbb{R}^{d}}(|x|+1)e^{-(1-\epsilon)|y|^{2}}|f(y)|dy\right)^{p(x)}\gamma_{d}(dx)
\\ &=
&\int\limits_{\mathbb{R}^{d}}\left(\int\limits_{\mathbb{R}^{d}}e^{-(\tilde{\epsilon}+1/p_{-})|y|^{2}}|f(y)|dy\right)^{p(x)}(|x|+1)^{p(x)}\gamma_{d}(dx)
\\ &\leq
&\int\limits_{\mathbb{R}^{d}}\left(\int\limits_{\mathbb{R}^{d}}|f(y)|^{p_{-}}e^{-|y|^{2}}dy\right)^{p(x)/p_{-}}\left(\int\limits_{\mathbb{R}^{d}}e^{-\tilde{\epsilon}|y|^{2}p'_{-}}dy\right)^{p(x)/p'_{-}}\\
&&\hspace{5.5cm} \times (|x|+1)^{p_{+}}\gamma_{d}(dx)
\\ &\lesssim
&\int\limits_{\mathbb{R}^{d}}\left(\int\limits_{\mathbb{R}^{d}}|f(y)|^{p_{-}}e^{-|y|^{2}}dy\right)^{\frac{p(x)}{p_{-}}}(|x|+1)^{p_{+}}\gamma_{d}(dx),
\end{eqnarray*}
and, since $\rho_{p(\cdot),\gamma_{d}}(f)\leq 1$,
\begin{eqnarray*}
\int\limits_{\mathbb{R}^{d}}|f(y)|^{p_{-}}e^{-|y|^{2}}dy&\lesssim&\int\limits_{|f|\geq 1}|f(y)|^{p_{-}}\gamma_{d}(dy)+\int\limits_{|f|<1}|f(y)|^{p_{-}}\gamma_{d}(dy)\\
&\leq&\int\limits_{|f|\geq 1}|f(y)|^{p(y)}\gamma_{d}(dy)+1\leq\rho_{p(\cdot),\gamma_{d}}(f)+1\leq 2.
\end{eqnarray*}
Thus, 
$$\int\limits_{\mathbb{R}^{d}}\left(\int\limits_{B_{h}^{c}(x)\cap
E^{c}_{x}}\left|N_{\beta/2}(x,y)\right||f(y)|dy\right)^{p(x)}\gamma_{d}(dx) \lesssim
2^{\frac{p_{+}}{p_{-}}}\int\limits_{\mathbb{R}^{d}}(|x|+1)^{p_{+}}\gamma_{d}(dx)=C_{d,p}.$$

Hence, 
$$\|I_{\beta}(f\chi_{B_{h}^{c}(\cdot)\cap E^{c}_{(\cdot)}})\|_{p(\cdot),\gamma_{d}}\leq C_{d,p}.$$
\item \underline{Case $b>0$}:\\

In this case, $I$ is a very problematic term so we will discuss it at the end.\\

Again, by inequality (\ref{expineq})
\begin{eqnarray*}
II&=&\frac{1}{2}\int^{1}_{0}\left(-log(\sqrt{1-t})\right)^{\beta/2}\frac{e^{-u(t)}}{t^{d/2+1}}\frac{\left|y-\sqrt{1-t}x\right|^{2}}{t}\,dt\\
&=&\frac{1}{2}\int^{1}_{0}\left(-log(\sqrt{1-t})\right)^{\beta/2}\frac{e^{-(1-\epsilon)u(t)}}{t^{d/2+1}} e^{-\epsilon u(t)}\frac{\left|y-\sqrt{1-t}x\right|^{2}}{t}\,dt\\
&\leq &C_{\epsilon}\int^{1}_{0}\left(-log(\sqrt{1-t})\right)^{\beta/2}\frac{e^{-(1-\epsilon)u(t)}}{t^{d/2+1}}\,dt.
\end{eqnarray*}
By using inequality (4.44) of \cite{urbina2019}, see also \cite{pe}, we get
\begin{equation}\label{phib0}
\frac{e^{-u(t)}}{t^{d/2}}\leq 2^{d}\frac{e^{-u(t_{0})}}{t_{0}^{d/2}}.
\end{equation}
Thus,
\begin{eqnarray}\label{phib1}
\nonumber \frac{e^{-(1-\epsilon)u(t)}}{t^{d/2}}&=&\left(\frac{e^{-u(t)}}{t^{d/2}}\right)^{1-\epsilon} \frac{1}{t^{\epsilon
d/2}}\\
&\lesssim
&\left(\frac{e^{-u(t_{0})}}{t_{0}^{d/2}}\right)^{1-\epsilon} \frac{1}{t^{\epsilon
d/2}}=\frac{e^{-(1-\epsilon)u(t_{0})}}{t_{0}^{d(1-\epsilon)/2}}\frac{1}{t^{\epsilon
d/2}}.
\end{eqnarray}
Now, splitting the above integral  into two the integrals on $[0,1/2]$ and $[1/2,1]$; we have for the first integral using (\ref{phib1}),
\begin{eqnarray*}
\int^{1/2}_{0}\left(-log(\sqrt{1-t})\right)^{\beta/2}\frac{e^{-(1-\epsilon)u(t)}}{t^{d/2+1}}\,dt&\lesssim
&\frac{e^{-(1-\epsilon)u(t_{0})}}{t_{0}^{d(1-\epsilon)/2}}\int^{1/2}_{0}\frac{\left(-log(\sqrt{1-t})\right)^{\beta/2}}{t^{\epsilon
d/2+1}}\,dt.
\end{eqnarray*}
Set $r=\min\{\frac{\beta}{4},\frac{1}{2}\},\,$ then
$0<r<1$ and by taking $\epsilon>0$ such that $\frac{\epsilon d}{2}<
\frac{\beta}{2}-r\,\,$ we get
\begin{equation*}
\lim_{t\to 0^{+}}\frac{\left(-log(\sqrt{1-t})\right)^{\beta/2}}{t^{\epsilon
d/2+r}} = \lim_{t\to 0^{+}}\left[\frac{\left(-log(\sqrt{1-t})\right)}{t}\right]^{\beta/2}t^{\beta/2 -(\epsilon d/2+r)}= 0,
\end{equation*}
thus, $\displaystyle\frac{\left(-log(\sqrt{1-t})\right)^{\beta/2}}{t^{\epsilon
d/2+r}}$ is bounded on $(0,1/2]$,
and hence
\begin{equation*}
\int^{1/2}_{0}\frac{\left(-log(\sqrt{1-t})\right)^{\beta/2}}{t^{\epsilon
d/2+1}}\,dt=\int^{1/2}_{0}\frac{\left(-log(\sqrt{1-t})\right)^{\beta/2}}{t^{\epsilon
d/2+r}}\frac{dt}{t^{1-r}}\leq
C_{\epsilon,\beta}\int^{1/2}_{0}\frac{dt}{t^{1-r}}=C_{\epsilon,\beta}.
\end{equation*}
Then,
\begin{equation}\label{bceromedio}
\int^{1/2}_{0}\left(-log(\sqrt{1-t})\right)^{\beta/2}\frac{e^{-(1-\epsilon)u(t)}}{t^{d/2+1}}\,dt\;\leq\;C_{\epsilon,\beta}\frac{e^{-(1-\epsilon)u(t_{0})}}{t_{0}^{d(1-\epsilon)/2}}.
\end{equation}
For the integral on $[1/2,1]$ we have, using again (\ref{phib1}) and Lemma \ref{lemgamma}
\begin{eqnarray*}
\int^{1}_{1/2}\left(-log(\sqrt{1-t})\right)^{\beta/2}\frac{e^{-(1-\epsilon)u(t)}}{t^{d/2+1}}\,dt&=&
\int^{1}_{1/2}\left(-log(\sqrt{1-t})\right)^{\beta/2}\left(\frac{e^{-u(t)}}{t^{d/2}}\right)^{(1-\epsilon)}\,\frac{dt}{t^{\epsilon
d/2+1}}\\
&\leq
&C_{\epsilon}\frac{e^{-(1-\epsilon)u(t_{0})}}{t_{0}^{d(1-\epsilon)/2}}\int^{1}_{1/2}\left(-log(\sqrt{1-t})\right)^{\beta/2}\,dt\\
&\leq
&C_{\epsilon}\frac{e^{-(1-\epsilon)u(t_{0})}}{t_{0}^{d(1-\epsilon)/2}}\int^{1}_{0}\left(-log(\sqrt{1-t})\right)^{\beta/2}\,dt\\
&=&C_{\epsilon,\beta}\frac{e^{-(1-\epsilon)u(t_{0})}}{t_{0}^{d(1-\epsilon)/2}}.
\end{eqnarray*}
Therefore, since $t_{0}<1$ and
$\frac{d}{2}(1-\epsilon)<\frac{d}{2},$ we get
$$
II\leq C_{\epsilon,\beta}\frac{e^{-(1-\epsilon)u(t_{0})}}{t_{0}^{d(1-\epsilon)/2}}
\leq C_{\epsilon}\frac{e^{-(1-\epsilon)u(t_{0})}}{t_{0}^{d/2}}.
$$
Now, using (\ref{phib0}) and (\ref{gammaint}) we get
\begin{eqnarray*}
III=\frac{1}{4}\int^{1}_{0}\left(-log(\sqrt{1-t})\right)^{\beta/2}\frac{e^{-u(t)}}{t^{d/2}}\frac{dt}{t}&\lesssim&
\frac{e^{-u(t_{0})}}{t_{0}^{d/2}}\int^{1}_{0}\left(-log(\sqrt{1-t})\right)^{\beta/2}\frac{dt}{t}\\
&=&C_{\beta}\frac{e^{-u(t_{0})}}{t_{0}^{d/2}}\leq
C_{\beta}\frac{e^{-(1-\epsilon)u(t_{0})}}{t_{0}^{d/2}}.
\end{eqnarray*}
Now, to estimate $I$, we use again inequality (\ref{expineq})
\begin{eqnarray*}
I&=&\int^{1}_{0}\left(-log(\sqrt{1-t})\right)^{\beta/2}\frac{e^{-u(t)}}{t^{d/2}}\frac{\left|y-\sqrt{1-t}x\right|}{\sqrt{t}}\frac{|x|}{\sqrt{1-t}}\frac{dt}{\sqrt{t}}\\
&\leq
&C_{\epsilon}|x|\int^{1}_{0}\left(-log(\sqrt{1-t})\right)^{\beta/2}\frac{e^{-(1-\epsilon)u(t)}}{t^{d/2}\sqrt{t}}\frac{dt}{\sqrt{1-t}}.
\end{eqnarray*}
Again, splitting the above integral  into two the integrals on $[0,1/2]$ and $[1/2,1]$. For the second integral, using  (\ref{phib0}),  that $t \geq 1/2$ and Lemma \ref{leminteg}, we get
\begin{eqnarray*}
\int_{1/2}^{1}\left(-log(\sqrt{1-t})\right)^{\frac{\beta}{2}}\frac{e^{-(1-\epsilon)u(t)}}{t^{\frac{d}{2}}\sqrt{t}}\frac{dt}{\sqrt{1-t}}&\lesssim& e^{-(1-\epsilon)u(t_{0})}\int_{1/2}^{1}\left(-log(\sqrt{1-t})\right)^{\frac{\beta}{2}}\frac{dt}{\sqrt{1-t}}\\
&=&C_{\beta}e^{-(1-\epsilon)u(t_{0})},
\end{eqnarray*}

Next, for the integral on $[0,1/2]$ we need to consider two cases:\\
\begin{itemize}
\item Case $\beta>0$ and $d=1$: by Lemma \ref{lemgamma} and the fact that $\frac{-log(\sqrt{1-t})}{t}$ is bounded on $(0, 1/2],$
\begin{eqnarray*}
\int_{0}^{1/2}\left(-log(\sqrt{1-t})\right)^{\frac{\beta}{2}}\frac{e^{-(1-\epsilon)u(t)}}{t^{\frac{1}{2}}\sqrt{t}}\frac{dt}{\sqrt{1-t}}
&=&\int_{0}^{1/2}\left(-log(\sqrt{1-t})\right)^{\frac{\beta}{2}}\frac{e^{-(1-\epsilon)u(t)}}{t}\frac{dt}{\sqrt{1-t}}\\
&=&\int_{0}^{1/2}\left(-log(\sqrt{1-t})\right)^{\frac{\beta}{2}-1}\frac{\left(-log(\sqrt{1-t})\right)}{t}e^{-(1-\epsilon)u(t)}\frac{dt}{\sqrt{1-t}}\\
&\leq&C_{\beta}\int_{0}^{1/2}\left(-log(\sqrt{1-t})\right)^{\frac{\beta}{2}-1}e^{-(1-\epsilon)u(t)}\frac{dt}{\sqrt{1-t}}\\
&\leq&C_{\beta}e^{-(1-\epsilon)u(t_{0})}\int_{0}^{1/2}\left(-log(\sqrt{1-t})\right)^{\frac{\beta}{2}-1}dt\\
&=&C_{\beta}e^{-(1-\epsilon)u(t_{0})}.
\end{eqnarray*}
\item Case $\beta\geq 1$ and $d\geq 2$: by taking $\epsilon <
\frac{2}{d}$
\begin{eqnarray*}
\int_{0}^{1/2}\left(-log(\sqrt{1-t})\right)^{\frac{\beta}{2}}\frac{e^{-(1-\epsilon)u(t)}}{t^{\frac{d}{2}}\sqrt{t}}\frac{dt}{\sqrt{1-t}}
&\leq& C\int_{0}^{1/2}\left(-log(\sqrt{1-t})\right)^{\frac{(\beta-1)}{2}}\frac{e^{-(1-\epsilon)u(t)}}{t^{\frac{d}{2}}}\frac{dt}{\sqrt{1-t}}\\
&= &C\int_{0}^{1/2}\frac{\left(-log(\sqrt{1-t})\right)^{\frac{(\beta-1)}{2}}}{t^{\frac{(d-2)}{2}}}\frac{e^{-(1-\epsilon)u(t)}}{t\sqrt{1-t}}dt\\
&= &C\int_{0}^{1/2}\frac{\left(-log(\sqrt{1-t})\right)^{\frac{(\beta-1)}{2}}}{t^{\frac{d}{2}\frac{(d-2)}{d}}}e^{-(\frac{d-2}{d})u(t)}\frac{e^{(\epsilon-\frac{2}{d})u(t)}}{t\sqrt{1-t}}dt\\
&\leq &C\frac{e^{-(\frac{d-2}{d})u(t_{0})}}{t_{0}^{\frac{d}{2}\frac{(d-2)}{d}}}\int_{0}^{1/2}\left(-log(\sqrt{1-t})\right)^{\frac{(\beta-1)}{2}}\frac{e^{(\epsilon-\frac{2}{d})u(t)}}{t\sqrt{1-t}}dt.
\end{eqnarray*}
Since
$\left(-log(\sqrt{1-t})\right)^{\frac{(\beta-1)}{2}}$ is continuous
on $[0,1/2]$ for $\beta\geq 1$ and proceeding in analogous way as in Lemma 4.36 of \cite{urbina2019}, we get
\begin{eqnarray*}
\int_{0}^{1/2}\left(-log(\sqrt{1-t})\right)^{\frac{\beta}{2}}\frac{e^{-(1-\epsilon)u(t)}}{t^{\frac{d}{2}}\sqrt{t}}\frac{dt}{\sqrt{1-t}} &\leq& C_{\beta}\frac{e^{-(\frac{d-2}{d})u(t_{0})}}{t_{0}^{\frac{d}{2}\frac{(d-2)}{d}}}\int_{0}^{1}\frac{e^{(\epsilon-\frac{2}{d})u(t)}}{t\sqrt{1-t}}dt\\
&\leq &C_{\beta}\frac{e^{-(\frac{d-2}{d})u(t_{0})}}{t_{0}^{\frac{d}{2}-1}}e^{(\epsilon-\frac{2}{d})u(t_{0})}\\
&= &C_{\beta}\frac{e^{-(1-\epsilon)u(t_{0})}}{t_{0}^{\frac{d}{2}-1}}= C_{\beta}\frac{e^{-(1-\epsilon)u(t_{0})}}{t_{0}^{\frac{d}{2}}}t_{0}.
\end{eqnarray*}
Thus,
$$I\leq C_{\epsilon,\beta}\displaystyle|x|\left(1+\frac{1}{t_{0}^{\frac{d}{2}}}t_{0}\right)e^{-(1-\epsilon)u(t_{0})}.$$
\end{itemize}

Finally, since $\displaystyle\frac{1}{t_{0}^{d/2}}\lesssim |x+y|^{d}$, $t_{0}<1$, and $|x|\leq |x+y|$ as $b>0$, we have
\begin{itemize}
\item For $|x|<1$,
\begin{eqnarray*}
 I&\leq& C_{\epsilon,\beta}\displaystyle|x|\left(1+\frac{1}{t_{0}^{\frac{d}{2}}}t_{0}\right)e^{-(1-\epsilon)u(t_{0})}\leq C_{\epsilon,\beta}\frac{1}{t_{0}^{\frac{d}{2}}}e^{-(1-\epsilon)u(t_{0})}\\
 &\leq& C_{\epsilon,\beta}|x+y|^{d}e^{-(1-\epsilon)u(t_{0})}.
 \end{eqnarray*}
\item For $|x|\geq 1$, since $b>0$, $|x|\leq |x+y|$
\begin{eqnarray*}
I&\leq & C_{\epsilon,\beta}\displaystyle|x|\left(1+\frac{1}{t_{0}^{\frac{d}{2}}}t_{0}\right)e^{-(1-\epsilon)u(t_{0})}\\
&=& C_{\epsilon,\beta}|x|e^{-(1-\epsilon)u(t_{0})}\,+\, C_{\epsilon,\beta}|x|t_{0}\frac{e^{-(1-\epsilon)u(t_{0})}}{t_{0}^{\frac{d}{2}}}\\
&\leq &
 C_{\epsilon,\beta}|x+y|^{d}e^{-(1-\epsilon)u(t_{0})}\,+\, C_{\epsilon,\beta}t_{0}|x||x+y|^{d}e^{-(1-\epsilon)u(t_{0})}.
\end{eqnarray*}
\end{itemize}
Given that  $t_{0}\leq C\frac{|x-y|}{|x+y|}$ and the fact that  $|x|\leq |x+y| $ we get, for $|x-y|<1$,
$$
|x|t_{0}\leq C\frac{|x||x-y|}{|x+y|}\leq C.
$$
 Thus, for $|x-y|<1$,
$$
 I\leq
C_{\epsilon,\beta}|x+y|^{d}e^{-(1-\epsilon)u(t_{0})};
$$
and for $|x-y|\geq 1$,
$$
I\leq
C_{\epsilon,\beta}|x+y|^{d+1}e^{-(1-\epsilon)u(t_{0})}.
$$

Hence, we conclude that\\ 
\begin{itemize}
\item $\left|N_{\beta/2}(x,y)\right|\leq C_{\epsilon,\beta}|x+y|^{d}e^{-(1-\epsilon)u(t_{0})},$ \; for either $|x|\leq 1,$ or for $|x|\geq 1$ with $|x-y|<1$.\\

\item $\left|N_{\beta/2}(x,y)\right|\leq C_{\epsilon,\beta}|x+y|^{d+1}e^{-(1-\epsilon)u(t_{0})},$ \; for \; $|x|\geq 1$ with $|x-y|\geq 1$.\\
\end{itemize}

Now, as  $b>0$, for $f\in L^{p(\cdot)}(\mathbb{R}^{d},\gamma_{d})$ with $\|f\|_{p(\cdot),\gamma_{d}}=1$, we have that
\begin{eqnarray*}
  &&  \int_{\mathbb{R}^{d}}\left(\int\limits_{B_{h}^{c}(x)\cap
E_{x}}\left|N_{\beta/2}(x,y)\right||f(y)|dy\right)^{p(x)}\gamma_{d}(dx)= \int_{|x|<1}\left(\int\limits_{B_{h}^{c}(x)\cap
E_{x}}\left|N_{\beta/2}(x,y)\right||f(y)|dy\right)^{p(x)}\gamma_{d}(dx)\\
 &&  \hspace{.5cm}+ \int_{|x|\geq 1}\left(\int\limits_{B_{h}^{c}(x)\cap
E_{x},|x-y|<1}\left|N_{\beta/2}(x,y)\right||f(y)|dy+\int\limits_{B_{h}^{c}(x)\cap
E_{x},|x-y|\geq 1}\left|N_{\beta/2}(x,y)\right||f(y)|dy\right)^{p(x)}\gamma_{d}(dx)
\end{eqnarray*}
\begin{eqnarray*}
 &\leq& \int_{|x|<1}\left(\int\limits_{B_{h}^{c}(x)\cap
E_{x}}\left|N_{\beta/2}(x,y)\right||f(y)|dy\right)^{p(x)}\gamma_{d}(dx)\\
 &+&  C\int_{|x|\geq 1}\left(\left(\int\limits_{B_{h}^{c}(x)\cap
E_{x},|x-y|<1}\left|N_{\beta/2}(x,y)\right||f(y)|dy\right)^{p(x)}+\left(\int\limits_{B_{h}^{c}(x)\cap
E_{x},|x-y|\geq 1}\left|N_{\beta/2}(x,y)\right||f(y)|dy\right)^{p(x)}\right)\gamma_{d}(dx)\\
 &\leq&C_{\epsilon,\beta}\int_{\Bbb R^{d}}\left(\int\limits_{B_{h}^{c}(x)\cap
E_{x}}|x+y|^{d}e^{-(1-\epsilon)u(t_{0})}|f(y)|dy\right)^{p(x)}\gamma_{d}(dx)\\
  &&  \hspace{0.75cm}+ C_{\epsilon,\beta}\int_{\Bbb R^{d}}\left(\int\limits_{B_{h}^{c}(x)\cap
E_{x},|x-y|\geq 1}|x+y|^{d+1}e^{-(1-\epsilon)u(t_{0})}|f(y)|dy\right)^{p(x)}\gamma_{d}(dx)\\
&=&C_{\epsilon,\beta}\int_{\Bbb R^{d}}\left(\int\limits_{B_{h}^{c}(x)\cap
E_{x}}|x+y|^{d}e^{-(1-\epsilon)u(t_{0})}e^{|y|^{2}/p(y)-|x|^{2}/p(x)}|f(y)|e^{-|y|^{2}/p(y)}dy\right)^{p(x)}dx\\
  &&  \hspace{0.75cm}+ C_{\epsilon,\beta}\int_{\Bbb R^{d}}\left(\int\limits_{B_{h}^{c}(x)\cap
E_{x},|x-y|\geq 1}|x+y|^{d+1}e^{-(1-\epsilon)u(t_{0})}e^{|y|^{2}/p(y)-|x|^{2}/p(x)}|f(y)|e^{-|y|^{2}/p(y)}dy\right)^{p(x)}dx.  
\end{eqnarray*}
Since $p(\cdot)\in\mathcal{P}_{\gamma_{d}}^{\infty}(\mathbb{R}^{d})$, we obtain that $e^{|y|^{2}/p(y)-|x|^{2}/p(x)}\approx e^{(|y|^{2}-|x|^{2})/p_{\infty}},$ and by the Cauchy-Schwartz inequality we have,
 $\left| |y|^{2}-|x|^{2}\right|\leq|x+y| |x-y|,$
  for all $x,y\in\mathbb{R}^{d}$. Therefore,
\begin{eqnarray*}
  &&\int_{B^{c}_h(x)\cap E_{x}}|x+y|^{d}e^{-(1-\epsilon)u(t_{0})}e^{|y|^{2}/p(y)-|x|^{2}/p(x)}|f(y)|e^{-|y|^{2}/p(y)}dy \hspace{3.5cm}\\
    && \hspace{6.5cm} \lesssim \int_{B^{c}_h(x)\cap E_{x}}P(x,y)|f(y)|e^{-|y|^{2}/p(y)}dy,
\end{eqnarray*}
and \begin{eqnarray*}
  &&\int_{B^{c}_h(x)\cap E_{x},|x-y|\geq 1}|x+y|^{d+1}e^{-(1-\epsilon)u(t_{0})}e^{|y|^{2}/p(y)-|x|^{2}/p(x)}|f(y)|e^{-|y|^{2}/p(y)}dy \hspace{3.5cm}\\
    && \hspace{5.5cm} \lesssim \int_{B^{c}_h(x)\cap E_{x}}Q(x,y)|f(y)|e^{-|y|^{2}/p(y)}dy,
\end{eqnarray*}
where
$$
  P(x,y)=|x+y|^{d}e^{-\alpha_{\infty}|x+y||x-y|}, \; Q(x,y)=|x+y|^{d+1}e^{-\alpha_{\infty}|x+y|}$$
and  
$$ \alpha_{\infty}=\left(\frac{1-\epsilon}{2}-\left|\frac{1}{p_{\infty}}-\frac{1-\epsilon}{2}\right|\right).$$
It is easy to see that $\alpha_{\infty}>0$  if $\epsilon<1/p'_{\infty}$. 

Therefore,  in order to make sense of all the estimates above we need to take 
$$0<\epsilon<\displaystyle\min\left\lbrace \frac{1}{d},\frac{1}{p'_{\infty}},\frac{2}{d}\left(\frac{\beta}{2}-r\right)\right\rbrace.$$
Observe that $P(x,y)$ is the same kernel considered in the proof of Theorem 3.5, page 416 of \cite{DalSco}, so we can conclude that
$$\displaystyle\int_{\Bbb R^{d}}\left(\int_{B^{c}_h(x)\cap E_{x}}P(x,y)|f(y)|e^{-|y|^{2}/p(y)}dy\right)^{p(x)}dx\leq C.$$
On the other hand, it can be proved that $Q(x,y)$ is integrable on each variable and the value of each integral is independent of $x$ and $y$.
Now, we use an analogous argument as in \cite{DalSco} for $Q(x,y)$. Taking, 
$$J=\displaystyle\int_{\Bbb R^{d}}Q(x,y)|f(y)|e^{-|y|^{2}/p(y)}dy,$$
and using H\"older's inequality, we obtain
 $$ J\lesssim\|Q(x,\cdot)\|_{p'(\cdot)}\|fe^{-|\cdot|^{2}/p(\cdot)}\|_{p(\cdot)}\leq \|Q(x,\cdot)\|_{p'(\cdot)}.$$
and,
 \begin{eqnarray*}
 \int_{\Bbb R^{d}}Q(x,y)^{p'(y)}dy&=&\int_{\Bbb R^{d}}|x+y|^{(d+1)p'(y)}e^{-\alpha_{\infty}|x+y|p'(y)}dy\\
 &\leq&\int_{|x+y|<1}|x+y|^{d+1}e^{-\alpha_{\infty}|x+y|}dy +\int_{|x+y|\geq 1}|x+y|^{(d+1)p'_{+}}e^{-\alpha_{\infty}|x+y|}dy\\
 &\leq&\int_{\Bbb R^{d}}\left(|z|^{d+1}+|z|^{(d+1)p'_{+}}\right)e^{-\alpha_{\infty}|z|}dz\\
&=&C_{p,d}.
  \end{eqnarray*}
  Thus, $J\lesssim\|Q(x,\cdot)\|_{p^{'}(\cdot)}\leq C_{p,d}$, and therefore
   $$\frac{1}{C_{p,d}}\int_{\Bbb R^{d}}Q(x,y)|f(y)|e^{-|y|^{2}/p(y)}dy\leq 1.$$
We set $g(y)=|f(y)|e^{-|y|^{2}/p(y)}=g_{1}(y)+g_{2}(y)$, where $g_{1}=g\chi_{\{g\geq 1\}}$ and $g_{2}=g\chi_{\{g<1\}}$, then
\begin{eqnarray*}
  \int_{\mathbb{R}^{d}}J^{p(x)}dx&=& \int_{\mathbb{R}^{d}}\left(\int_{\mathbb{R}^{d}}Q(x,y)|f(y)|e^{-|y|^{2}/p(y)}dy\right)^{p(x)}dx \\
      &\lesssim& \int_{\mathbb{R}^{d}}\left(\frac{1}{C_{p,d}}\int_{\mathbb{R}^{d}}Q(x,y)g(y)dy\right)^{p(x)}dx\\
   &\lesssim& \int_{\mathbb{R}^{d}}\left(\frac{1}{C_{p,d}}\int_{\mathbb{R}^{d}}Q(x,y)g_{1}(y)dy\right)^{p(x)}dx+\int_{\mathbb{R}^{d}}\left(\frac{1}{C_{p,d}}\int_{\mathbb{R}^{d}}Q(x,y)g_{2}(y)dy\right)^{p(x)}dx.
 \end{eqnarray*}
By H\"{o}lder's inequality and Fubini's theorem 
\begin{eqnarray*}
 \int_{\mathbb{R}^{d}}\left(\frac{1}{C_{p,d}}\int_{\mathbb{R}^{d}}Q(x,y)g_{1}(y)dy\right)^{p(x)}dx&\lesssim&\int_{\mathbb{R}^{d}}\left(\int_{\mathbb{R}^{d}}Q(x,y)g_{1}(y)dy\right)^{p_{-}}dx\\
 &=&\int_{\mathbb{R}^{d}}\left(\int_{\mathbb{R}^{d}}Q^{\frac{1}{p'_{-}}}(x,y)Q^{\frac{1}{p_{-}}}(x,y)g_{1}(y)dy\right)^{p_{-}}dx\\
 &\leq&\int_{\mathbb{R}^{d}}\left(\int_{\mathbb{R}^{d}}Q(x,y)dy\right)^{p_{-}/p'_{-}}\left(\int_{\mathbb{R}^{d}}Q(x,y)g^{p_{-}}_{1}(y)dy\right)dx\\
 &=&C_{p}\int_{\mathbb{R}^{d}}\left(\int_{\mathbb{R}^{d}}Q(x,y)g^{p_{-}}_{1}(y)dy\right)dx\\
  &=&C_{p}\int_{\mathbb{R}^{d}}g^{p_{-}}_{1}(y)\left(\int_{\mathbb{R}^{d}}Q(x,y)dx\right)dy=C_{p}\int_{\mathbb{R}^{d}}g^{p_{-}}_{1}(y)dy.
 \end{eqnarray*}
 Therefore, 
 \begin{eqnarray*}
 \int_{\mathbb{R}^{d}}\left(\frac{1}{C_{p,d}}\int_{\mathbb{R}^{d}}Q(x,y)g_{1}(y)dy\right)^{p(x)}dx&\lesssim&\int_{\mathbb{R}^{d}}g^{p(y)}_{1}(y)dy\leq\int_{\mathbb{R}^{d}}|f(y)|^{p(y)}e^{-|y|^{2}}dy\\
 &\lesssim& \rho_{p(\cdot),\gamma_{d}}(f)\leq 1.
 \end{eqnarray*}
On the other hand, applying the inequality (\ref{3.26.1}) in Lemma \ref{lema3.26CU}, since\\
$G(x):=\displaystyle\frac{1}{C_{p,d}}\int_{\mathbb{R}^{d}}Q(x,y)g_{2}(y)dy\leq 1$, we obtain that
\begin{eqnarray*}
 \int_{\mathbb{R}^{d}}\left(\int_{\mathbb{R}^{d}}\frac{1}{C_{p,d}}Q(x,y)g_{2}(y)dy\right)^{p(x)}dx&=&\int_{\mathbb{R}^{d}}(G(x))^{p(x)}dx \\
  &\leq& \int_{\mathbb{R}^{d}}(G(x))^{p_{\infty}}dx + \int_{\mathbb{R}^{d}}\frac{dx}{(e+|x|)^{dp_{-}}}\\
  &\lesssim&\int_{\mathbb{R}^{d}}\left(\int_{\mathbb{R}^{d}}Q(x,y)g_{2}(y)dy\right)^{p_{\infty}}+C_{d,p}.
\end{eqnarray*}

Finally, to estimate the integral 
$$\int_{\mathbb{R}^{d}}\left(\int_{\mathbb{R}^{d}}Q(x,y)g_{2}(y)dy\right)^{p_{\infty}}dx,$$
we proceed in an analogous way, applying H\"{o}lder's inequality to the exponent $p_{\infty}$, Fubbini's theorem and inequality (\ref{3.26.2}) in Lemma \ref{lema3.26CU}, we get
\begin{eqnarray*}
  \int_{\mathbb{R}^{d}}\left(\int_{\mathbb{R}^{d}}Q(x,y)g_{2}(y)dy\right)^{p_{\infty}}dx &\lesssim&\int_{\mathbb{R}^{d}}g_{2}^{p_{\infty}}(y)dy\\
&\leq&\int_{\mathbb{R}^{d}}g_{2}^{p(y)}(y)dy+\int_{\mathbb{R}^{d}}\frac{dy}{(e+|y|)^{dp_{-}}}\\
&\leq&\int_{\mathbb{R}^{d}}|f(y)|^{p(y)}e^{-|y|^{2}}dy+C_{d,p}\lesssim 1+C_{d,p}.
\end{eqnarray*}
Thus,
\begin{eqnarray*}
 \int_{\mathbb{R}^{d}}\left(\int\limits_{B_{h}^{c}(x)\cap
E_{x}}\left|N_{\beta/2}(x,y)\right||f(y)|dy\right)^{p(x)}\gamma_{d}(dx)&\leq& C_{d,p}.
\end{eqnarray*}
With this, we obtain that $\|I_{\beta}(f\chi_{B_{h}^{c}(\cdot)\cap E_{(\cdot)}})\|_{p(\cdot),\gamma_{d}}\leq C_{d,p}$.\\
We conclude that
$$
\left\|I_{\beta,G}(f)\right\|_{p(\cdot), \gamma_{d}}=\left\|I_{\beta}\left(f \chi_{B^{c}_h(\cdot)}\right)\right\|_{p(\cdot), \gamma_{d}} \leq C,
$$\\
and by homogeneity of the norm,\\

$
\left\|I_{\beta,G}(f)\right\|_{p(\cdot), \gamma_{d}} \leq C\|f\|_{p(\cdot),\gamma_{d}}$, for all function $f\in L^{p(\cdot)}(\mathbb{R}^{d},\gamma_{d})$.\\

Therefore,  the global part $I_{\beta,G}$ is bounded in $L^{p(\cdot)}(\gamma_d)$ and the proof is complete. \\
\end{itemize}

\end{itemize}

\end{proof}

\end{document}